\newcommand{\bb}[1]{\textcolor{blue}{#1}}
\newcommand{\ignore}[1]{}
\DeclareMathOperator*{\minimize}{minimize}
\def\Zt{\widetilde Z}
\newcommand\T{\rule{0pt}{3ex}}       
\newcommand\B{\rule[-1.5ex]{0pt}{0pt}} 
\begin{document}

\title{
Rational minimax approximation via adaptive barycentric
representations
}

\author{Silviu-Ioan Filip\thanks{Univ Rennes, Inria, CNRS, IRISA, F-35000 Rennes, France
		(silviu.filip@inria.fr).}
\and  Yuji Nakatsukasa\thanks{
National Institute of Informatics, 2-1-2 Hitotsubashi, Chiyoda-ku, Tokyo 101-8430, Japan.  (nakatsukasa@nii.ac.jp)}
\and Lloyd N. Trefethen\thanks{
	Mathematical Institute,	University of Oxford,
	Oxford, OX2 6GG, UK
	(trefethen@maths.ox.ac.uk).
	SF and LNT were supported by the European Research Council under the
	European Union’s Seventh Framework Programme (FP7/2007–2013)/ERC grant agreement 291068.
	The views expressed in this article are not those of the ERC or the European Commission, and the
	European Union is not liable for any use that may be made of the information contained here. YN was supported by Japan Society for the Promotion of Science as an Overseas Research Fellow. }
\and Bernhard Beckermann\thanks{
Laboratoire Paul Painleve UMR 8524, Dept.\ Math\'ematiques, Univ.\ Lille, F-59655 Villeneuve d'Ascq CEDEX, France
 (bernhard.beckermann@univ-lille1.fr). Supported in part by the Labex CEMPI (ANR-11-LABX-0007-01).
}
}

\maketitle

%
%
%
%
%
%
%
%
%
%
%

\ignore{
\begin{enumerate}
\item Introduction
  \begin{enumerate}
  \item $(\surd)$ general intro
  \item $\surd$ why recommend Remez? --- (i) quadratically convergent (vs. AAA), (ii) discretization-free (vs. both), 
 (iii) speed (vs. DC) 
  \end{enumerate}
\item Barycentric rational functions
  \begin{enumerate}
  \item ($\surd$) Importance of choice of support points 
\item $\surd$ $m\neq n$
  \end{enumerate}
\item Barycentric rational Remez (long)
  \begin{enumerate}
\item ($\surd$) review of classical Remez
\item $\surd$ core algorithm (eigenvalue problem derivation $\&\ m\neq n$)
\item $\surd$ adaptive choice of support points---why does bary help? 
\item  initialization (CF, \bb{($\surd$)} extrapolation, skipping $\surd$ AAA-Lawson details)
\item $\surd$ finding next reference points
  \end{enumerate}
\item $\surd$ Numerical results (on Remez)
\item AAA-Lawson (new algorithm! but slow, so used here for initializing Remez)
  \begin{enumerate}
  \item $\surd$ algorithm
  \item $\surd$ adaptive(?) choice of support points 
  \item $\surd$ experiments (itself + vs. Remez)
  \end{enumerate}
\item \bb{($\surd$)} A barycentric DC
  \begin{enumerate}
  \item algorithm 
  \item choice of support points 
  \item experiments
  \end{enumerate}
\item Conclusion
\end{enumerate}
\hrule
Key facts
\begin{itemize}
\item main message is bary helps a lot!
\item (bary-)Remez usually converges quadratically
\item AAA-Lawson converges linearly (or worse)
\item DC converges most robustly (but needs discretization)
\end{itemize}
}

\begin{abstract}
Computing rational minimax approximations can be very challenging
when there are singularities on or near the interval of approximation
--- precisely the case where rational functions outperform polynomials
by a landslide.  We show that far more robust algorithms than
previously available can be developed by making use of rational
barycentric representations whose support points are chosen in an
adaptive fashion as the approximant is computed.  Three variants of
this  barycentric strategy are all shown to be powerful:
(1) a classical Remez algorithm,
(2) a ``AAA-Lawson'' method of iteratively reweighted least-squares, and (3) a differential correction
algorithm.  Our preferred combination, implemented in the Chebfun
MINIMAX code, is to use (2) in an initial phase and then switch
to (1) for generically quadratic convergence.  By such methods we can
calculate approximations up to type $(80,80)$ of $|x|$ on $[-1,1]$
in standard 16-digit floating point arithmetic, a problem for which
Varga, Ruttan, and Carpenter required 200-digit extended precision.

\end{abstract}

\begin{keywords}barycentric formula, rational minimax approximation, Remez algorithm, differential correction algorithm, AAA algorithm, Lawson algorithm\end{keywords}

\begin{AMS}
41A20, 65D15
\end{AMS}

\pagestyle{myheadings} 
\thispagestyle{plain} 
\markboth{
FILIP, 
 NAKATSUKASA, 
TREFETHEN AND
BECKERMANN}
{\uppercase{
RATIONAL MINIMAX APPROXIMATION
}}

\section{Introduction}\label{sec:intro}
The problem we are interested in is that of approximating functions $f\in\mathcal{C}([a,b])$ using type $(m,n)$ rational approximations with real coefficients, in the $L^\infty$ setting. The set of feasible approximations is
\begin{equation}  \label{eq:ratquotient}
\mathcal{R}_{m,n}=\left\lbrace \frac{p}{q}:\  p\in\mathbb{R}_m[x],\ q\in\mathbb{R}_n[x]\right\rbrace.  
\end{equation}
Given $f$ and prescribed nonnegative integers $m,n$, the goal is to compute
\begin{equation}  \label{eq:mainproblem}
\min_{r\in\mathcal{R}_{m,n}}\|f-r\|_{\infty},
\end{equation}
where $\|\cdot\|_\infty$ denotes the infinity norm over $[a,b]$, i.e.,~$\|f-r\|_{\infty}=\max_{x\in[a,b]}|f(x)-r(x)|$.
The minimizer of~\eqref{eq:mainproblem} is known to exist and to be unique~\cite[Ch. 24]{trefethen2013approximation}.

Let the~\emph{minimax} (or~\emph{best}) approximation be written $r^*=p/q\in\mathcal{R}_{m,n}$, where $p$ and $q$ have no common factors. 
The number $d=\min\left\lbrace m-\deg p, m-\deg q\right\rbrace$ is called the \emph{defect} of $r^*$.
It is known that there exists a so-called~\emph{alternant} (or~\emph{reference}) set consisting of ordered nodes $a\leqslant x_0<x_1<\cdots<x_{m+n+1-d}\leqslant b$, where $f-r^*$ takes its global extremum over $[a,b]$ with alternating signs. In other words, we have the  beautiful~\emph{equioscillation} property~\cite[Theorem~24.1]{trefethen2013approximation}
\begin{equation}\label{eq:alternori}
	f(x_\ell)-r^{*}(x_\ell)=(-1)^{\ell+1}\lambda, \qquad \ell=0,\ldots, m+n+1-d,
\end{equation}
where $|\lambda|=\|f-r^*\|_{\infty}$. Minimax approximations with $d>0$ are called~\emph{degenerate}, and they can cause problems for computation. 
Accordingly, unless otherwise stated, we make the assumption that $d=0$ for~\eqref{eq:mainproblem}. In practice, degeneracy most often arises due to symmetries in approximating even or odd functions, and we check for these cases explicitly to make sure they are treated properly. Other degeneracies can usually be detected by examining in succession the set of best approximations of types $(m-k,n-k), (m-k+1,n-k+1),\ldots,(m,n)$ with $k=\min \left\lbrace m,n\right\rbrace$~\cite[p.~161]{braess2012nonlinear}.

In the approximation theory literature~\cite{cheney1982introduction,powell1981approximation,watson1980approximation,braess2012nonlinear,meinardus1967approximation}, two algorithms are usually considered for the numerical solution of~\eqref{eq:mainproblem}, the~\emph{rational Remez} and~\emph{differential correction} (DC) algorithms. The various challenges that are inherent in rational approximations can, more often than not, make the use of such methods difficult. 
Finding the best polynomial approximation, by contrast, can usually be done robustly by a standard implementation of the linear version of the Remez algorithm~\cite{PachTrefethen09}.
This might explain why the current software landscape for minimax rational approximations is rather barren. Nevertheless, implementations of the rational Remez algorithm are available in some mathematical software packages: the Mathematica~\texttt{MiniMaxApproximation} function, the Maple~\texttt{numapprox[minimax]} routine and the MATLAB Chebfun~\cite{Driscoll2014}~\texttt{remez} code. The Boost C++ libraries~\cite{boostsite} also contain an implementation. 

Over the years, the applications that have benefited most from minimax rational approximations come from recursive filter design in signal processing~\cite{deczky1974equiripple,brophy1975synthesis} and the representation of special functions~\cite{cody1975funpack,cody1993specfun}. Apart from such practical motivations, we believe it worthwhile to pursue robust numerical methods for computing these approximations because of their fundamental importance to approximation theory.
A new development
of this kind has already resulted from the algorithms described here:
the discovery that type $(k,k)$ rational approximations to $x^n$,
for $n\gg k$, converge geometrically at the rate $O(9.28903\cdots ^{-k})$~\cite{nakatsukasa2018rational}. 

In this paper we present elements that greatly improve the numerical robustness of algorithms for computing best rational approximations. The key idea is the use of barycentric representations with 
adaptively chosen basis functions, which can overcome the numerical difficulties frequently encountered when $f$ has nonsmooth points. 
For instance, when trying to approximate $f(x)=|x|$ on $[-1,1]$ using standard IEEE double precision arithmetic in MATLAB, our barycentric Remez algorithm can compute rational approximants of type up to $(82,82)$---higher than that obtained by Varga, Ruttan and Carpenter in~\cite{VargaEtAl93} using $200$-digit arithmetic\footnote{Chebfun's previous {\tt remez} command (until version 5.6.0 in December 2016) could only go up to type $(8,8)$.}.

A similar Remez iteration using the barycentric representation was described by Ioni\textcommabelow{t}\u{a}~\cite[Sec.~3.2.3]{ionita2013lagrange} in his PhD thesis. We adopt the same set of \emph{support points} (see Section~\ref{sec:qrcond}), 
and our analysis justifies its choice: we prove its optimality in a certain sense. A difference from Ioni\textcommabelow{t}\u{a}'s
 treatment is that we reduce the core computational task to a symmetric eigenvalue problem, rather than a generalized eigenproblem as in~\cite{ionita2013lagrange}. The bigger difference is that Ioni\textcommabelow{t}\u{a} treated just the core iteration for approximations of type $(n,n)$, whereas we generalize the approach to type $(m,n)$ and include the initialization strategies that are crucial for making the entire procedure into a fully practical algorithm.


This work is motivated by the recent~\emph{AAA algorithm}~\cite{aaapreprint} for rational approximation, 
which uses adaptive barycentric representations with great success. A large part of the text is focused on introducing a robust version of the rational Remez algorithm, followed by a discussion of two other methods for discrete $\ell_\infty$ rational approximation: the AAA-Lawson algorithm (efficient at least in the early stages, but non-robust) and the DC algorithm (robust, but not very efficient). We shall see how all three algorithms benefit from an adaptive barycentric basis. In practice, we advocate using the Remez algorithm, mainly for its convergence properties (usually quadratic~\cite{curtisosborne1966quadratic}, unlike AAA-Lawson, which converges linearly at best), practical speed (an eigenvalue-based Remez implementation is usually much faster than a linear programming-based DC method), and its ability to work with the interval $[a,b]$ directly rather than requiring a discretization (unlike both AAA-Lawson and DC). AAA-Lawson is used mainly as an efficient approach to initialize the Remez algorithm.


The paper is organized as follows. In Section~\ref{sec:barybasic} 
we review the barycentric representation for rational functions. 
Sections~\ref{sec:remezbasics} to~\ref{sec:nextrefs} are the core of the paper; here we develop the barycentric rational Remez algorithm with adaptive basis functions.
Numerical experiments are presented in Section~\ref{sec:numres}. 
We describe the AAA-Lawson algorithm in Section~\ref{sec:iniAAA}, and  
in Section~\ref{sec:dc} we briefly present the barycentric version of the differential correction algorithm. 
Section~\ref{sec:conclude} presents a flow chart of {\tt minimax} and an example of how to compute a best approximation in Chebfun.

\section{Barycentric rational functions}\label{sec:barybasic}
All of our methods are made possible by a barycentric representation of $r$, in which both the numerator and denominator are given as partial fraction expansions. Specifically, we consider
\begin{equation}
\label{eq:baryr}
r(z)=\frac{N(z)}{D(z)}=\sum_{k=0}^n\dfrac{\alpha_k}{z-t_k}\bigg/\sum_{k=0}^n\dfrac{\beta_k}{z-t_k},   
\end{equation}
where $n\in\mathbb{N}$, 
$\alpha_0,\ldots,\alpha_n$ and $\beta_0,\ldots,\beta_n$ are sets of real coefficients and $t_0,\ldots,t_n$ is a set of distinct real \emph{support points}. The names $N$ and $D$ stand for ``numerator'' and ``denominator''. 

If we denote by $\omega_t$ the \emph{node polynomial} associated with $t_0,\ldots,t_n$,
\[
\omega_t(z)=\prod_{k=0}^{n}(z-t_k),
\]
then $p(z)=\omega_t(z)N(z)$ and $q(z)=\omega_t(z)D(z)$ are both polynomials in $\mathbb{R}_n[x]$. We thus get $r(z)=p(z)/q(z)$, meaning that $r$ is a type $(n,n)$ rational function. (This is not necessarily sharp; $r$ may also be of type $(\mu,\nu)$ with $\mu<n$ and/or $\nu<n$.) At each point $t_k$ with nonzero $\alpha_k$ or $\beta_k$, formula~\eqref{eq:baryr} is undefined, but this is a removable singularity with $\lim_{z\rightarrow t_k}r(z)=\alpha_k/\beta_k$ (or a simple pole in the case $\alpha_k\neq 0, \beta_k=0$), meaning $r$ is a \emph{rational interpolant} to the values $\left\lbrace\alpha_k/\beta_k\right\rbrace$ at the support points $\left\lbrace t_k\right\rbrace$.

Much of the literature on barycentric representations exploits this interpolatory property~\cite{schneider1986some,berrut1988rational,berrut2004barycentric,berrut2005recent,floater2007barycentric,brezinski2013pade} by taking $\alpha_k=f(t_k)\beta_k$, so that $r$ is an interpolant to some given function values $f(t_0),\ldots,f(t_n)$ at the support points. In this case 
\begin{equation}  \label{eq:rzbary}
r(z) =\sum_{k=0}^n\dfrac{f(t_k)\beta_k}{z-t_k}\bigg/\sum_{k=0}^n\dfrac{\beta_k}{z-t_k},  
\end{equation}
with the coefficients $\left\lbrace\beta_k\right\rbrace$ commonly known as \emph{barycentric weights}; we have $r(t_k)=f(t_k)$  as long as $\beta_k\neq 0$. While such a property is useful and convenient when we want to compute good approximations to $f$ (see in particular the AAA algorithm), for a best rational approximation $r^*$ we do not know a priori where $r^*$ will intersect $f$, so enforcing interpolation is not always an option. (We use interpolation for Remez but not for AAA-Lawson or DC.) Formula~\eqref{eq:baryr}, on the other hand, has  $2n+1$ degrees of freedom and can be used to represent any rational function of type $(n,n)$ by appropriately choosing $\left\lbrace\alpha_k\right\rbrace$ and $\left\lbrace\beta_k\right\rbrace$~\cite[Theorem~2.1]{aaapreprint}. We remark that variants of~\eqref{eq:baryr} also form the basis for the popular vector fitting~\cite{gustavsen1999rational,gustavsen2006improving} method used to match frequency response measurements of dynamical systems. A crucial difference is that the support points $\{t_k\}$ in vector fitting are selected to approximate poles of $f$, whereas, as we shall describe in detail, we choose them so that our representation uses a numerically stable basis.

\subsection{Representing rational functions of nondiagonal type}\label{subsec:nondiag}
Functions $r$ expressed in the barycentric form~\eqref{eq:baryr} range precisely over the set of all rational functions of (not necessarily exact) type $(n,n)$. 
When one requires rational functions of type $(m,n)$
 with $m\neq n$, additional steps are needed to enforce the type. 

The approach we have followed,
which we shall now describe, is a linear algebraic one based on previous work by Berrut and Mittelmann~\cite{berrut1997matrices}, where we
make use of Vandermonde matrices to impose certain conditions that
limit the numerator or denominator degree.  An alternative might be to avoid such matrices and constrain the barycentric representation
more directly to have a certain number of poles or zeros at $z=\infty$.
This is a matter for future research.

To examine the situation, we first suppose $m<n$ and convert $r$ into the conventional polynomial quotient representation 
\begin{equation}
  \label{eq:linbarypoly}
r(z)=
\frac{\omega_t(z)N(z)}{\omega_t(z) D(z)}
=\frac{\prod_{k=0}^n(z-t_k)\sum_{k=0}^n\dfrac{\alpha_k}{z-t_k}}{\prod_{k=0}^n(z-t_k)\sum_{k=0}^n\dfrac{\beta_k}{z-t_k}}=:\frac{p(z)}{q(z)}.   
\end{equation} 
The numerator $p$ is a polynomial
 of degree at most $n$. 
Further, it can be seen (either via direct computation or from~\cite[eq.~(1)]{berrut1997matrices}) that $p$ is of degree $m\ (<n)$ if and only if the vector $\alpha=[\alpha_{0},\ldots,\alpha_n]^T$ lies in a subspace 
spanned by the null space of the (transposed) Vandermonde matrix
\begin{equation}  \label{eq:Vm}
V_m =   \begin{bmatrix}
1     &    1    &  \cdots & 1\\
t_0 &    t_1 & \cdots &  t_{n}\\
\vdots   &     \vdots   &  & \vdots \\
t_0^{n-1-m}&t_1^{n-1-m} & \cdots & t_{n}^{n-1-m}
\end{bmatrix}. 
\end{equation}
That is, to enforce $r\in\mathcal{R}_{m,n}$ with $m<n$,
we 
require $\alpha\in\mbox{span}(P_m)$, where 
$P_m\in\mathbb{R}^{(n+1)\times (m+1)}$ has orthonormal columns, 
obtained by taking the full QR factorization 
$V_m^T=
\begin{bmatrix}
P_m^\perp& P_m  
\end{bmatrix}\begin{bmatrix}
R_m\\
0  
\end{bmatrix}
$, 
where $P_m^\perp\in\mathbb{R}^{(n+1)\times (n-m)}$, 
$R_m\in\mathbb{R}^{(n-m)\times (n-m)}$. 
Note that $R_m$ is nonsingular if the support points $\{t_k\}$ are distinct.

Similarly, for $m>n$, we need to 
take $m+1$ terms in~\eqref{eq:baryr}, that is, 
$r(z)=\sum_{k=0}^{m}\alpha_k(z-t_k)^{-1}\big/\sum_{k=0}^m\beta_k(z-t_k)^{-1}$, and 
force 
$\beta\in \mbox{span}(P_n)$, where
$\mbox{span}(P_n)$ is the null space of the matrix 
\begin{equation}\label{eq:Vn}
V_n=  \begin{bmatrix}
1     &    1    &  \cdots & 1\\
t_0 &    t_1 & \cdots &  t_{m}\\
\vdots   &     \vdots   &  & \vdots \\
t_0^{m-1-n}&t_1^{m-1-n} & \cdots & t_{m}^{m-1-n}
\end{bmatrix}, 
\end{equation}
obtained by the QR factorization 
$V_n^T=
\begin{bmatrix}
P_n^\perp& P_n  
\end{bmatrix}
\begin{bmatrix}
R_n\\
0  
\end{bmatrix}
$, where $P_n^\perp\in\mathbb{R}^{(m+1)\times (m-n)}$, 
$R_n\in\mathbb{R}^{(m-n)\times (m-n)}$.

In Section~\ref{sec:nondiag} we describe how to use the matrices $P_m,P_n$ in specific situations. 
Since these matrices are obtained via $V_m,V_n$ in~\eqref{eq:Vm}--\eqref{eq:Vn} and real-valued Vandermonde matrices are usually highly ill-conditioned~\cite{beckermann2000condition,beckermann2017singular,pan2016bad}, care is needed when computing their null spaces, as extracting the orthogonal factors in QR (or SVD) is susceptible to numerical errors. 
Berrut and Mittelmann~\cite{berrut1997matrices} suggest a careful elimination process to remedy this (for a slightly different problem). 
Here, in view of the Krylov-type structure of the matrices $V_m^T$ and $V_n^T$, 
we propose the following simpler approach, based on an Arnoldi-style orthogonalization: 
\medskip
\begin{enumerate}
	\item Let $Q=[1,\ldots,1]^T$ when $m>n$, and 
	$Q = [f(t_0),\ldots,f(t_n)]^T$ when $m<n$, and normalize to have Euclidean norm 1. 
	\item Let $q$ be the last column of $Q$. 
	Take the projection of $\mbox{diag}(t_0,\ldots,t_{\max(m,n)})q$ 	onto the orthogonal complement of $Q$, normalize, and append it to the right of $Q$.
 Repeat this $|m-n|$ times to obtain $Q\in\mathbb{C}^{(\max(m,n)+1)\times (|m-n|)}$. In MATLAB, this is 
	{\tt q = Q(:,end); q = diag(t)*q; 
for i = 1:size(Q,2), q = q-Q(:,i)*(Q(:,i)'*q); end,
 q = q/norm(q); Q = [Q,q];}. 
	\item Take the orthogonal complement $Q^\perp$ of $Q$ via computing the QR factorization of $Q$. $Q^\perp$ is the desired matrix, $P_m$ or $P_n$.
\end{enumerate}
\medskip
Note that the matrix $Q$ in the final step is well conditioned ($\kappa_2(Q)=1$ in exact arithmetic), so the final QR factorization is a stable computation. 


\subsection{Why does the barycentric representation help?}\label{sec:whybary}
The choice of the support points $\left\lbrace t_k\right\rbrace$ is very important numerically, and indeed it is the flexibility of where to place these points that is the source of the power of barycentric representations. If the points are well chosen, the basis functions $1/(x-t_k)$ lead to a representation of $r$ that is much better conditioned (often exponentially better) than the conventional representation as a ratio of polynomials. 
We motivate and explain our adaptive choice of $\left\lbrace t_k\right\rbrace$ for the Remez algorithm in Sections~\ref{sec:qrcond} and~\ref{sec:supportpts}. The analogous choices for AAA-Lawson and DC are discussed in Sections~\ref{sec:aaasupport} and~\ref{sec:dcsupport}.

To understand why a barycentric representation is preferable
for rational approximation, we first consider the standard
quotient representation $p/q$.  It is well known that a polynomial
will vary in size by exponentially large factors over
an interval unless its roots are suitably distributed
(approximating a minimal-energy configuration).  If $p/q$ is
a rational approximation, however, the zeros of $p$ and $q$ will
be positioned by approximation considerations, and if $f$ has
singularities or near-singularities they will be clustered
near those points.  In the clustering region, $p$ and $q$ will be
exponentially smaller than in other parts of the interval and
will lose much or all of their relative accuracy.  Since the
quotient $p/q$ depends on that relative accuracy, its accuracy
too will be lost.

\begin{figure}[htbp]
	\centering
	\includegraphics[width=0.6\textwidth]{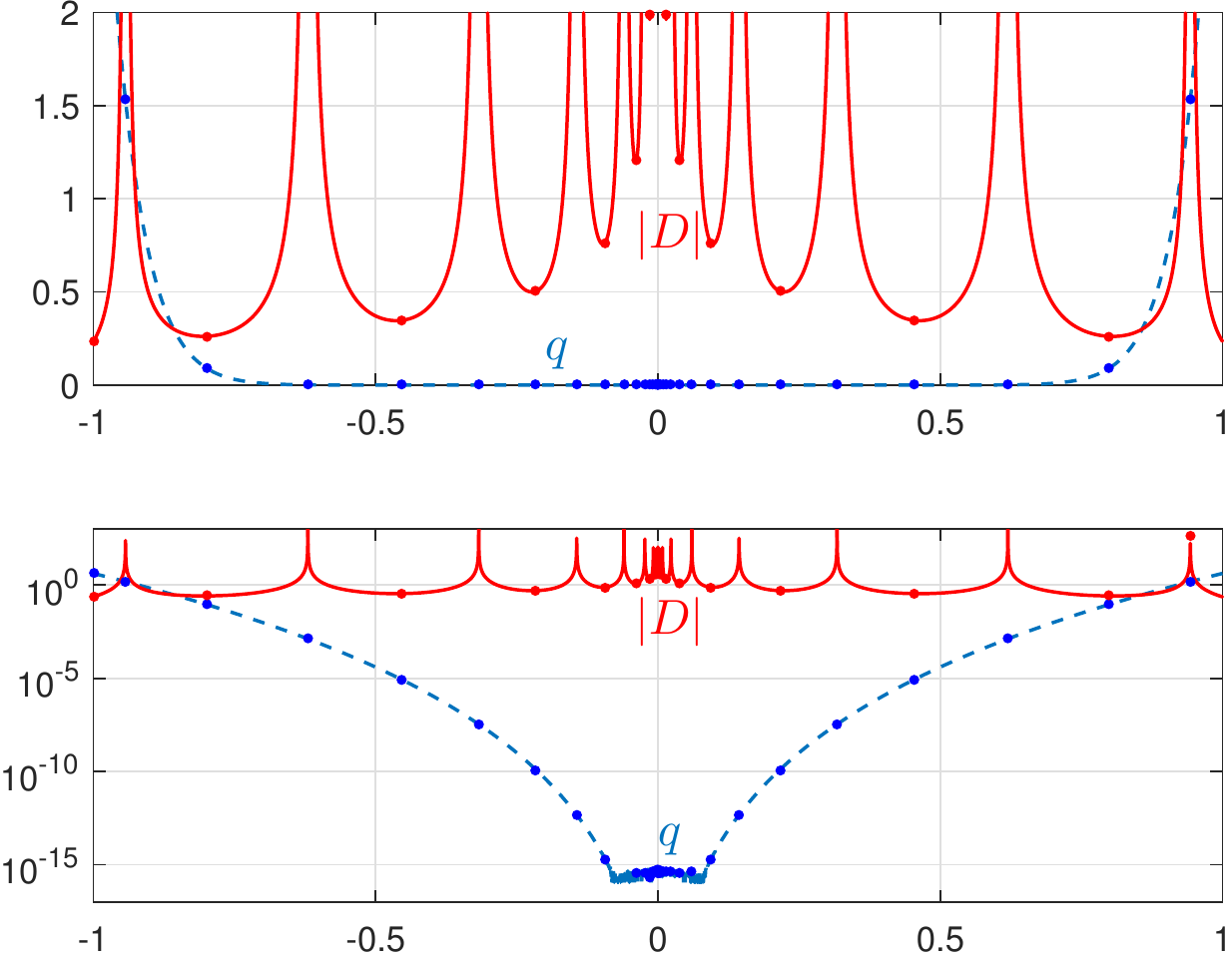}  
	\caption{
		Linear (top) and semilogy (bottom) plots of $q$ and $|D|$ in $r^{*}=p/q=N/D$, 
		the best rational approximation for $|x|$ of type $(m,n)=(20,20)$.
		Here $p,q$ are the polynomials in the classical quotient representation~\eqref{eq:ratquotient}, and $D$ is the denominator in the barycentric representation~\eqref{eq:baryr}. The dots are the equioscillation points $\{x_\ell\}$, while the set of support points $\{t_k\}$ consists of every other point in $\{x_\ell\}$.
	}
	\label{fig:plotq}
	
\end{figure}

A barycentric quotient $N/D$, by contrast, is composed of terms
that  vary in size just algebraically across the interval, not
exponentially, so this effect does not arise.  If the support
points are suitably clustered, $N$ and $D$ may have approximately
uniform size across the interval (away from their poles, which
cancel in the quotient), as illustrated in Figure~\ref{fig:plotq}.

\subsection{Numerical stability of evaluation}
Regarding the evaluation of $r$ in the barycentric representation, 
Higham's analysis in~\cite[p.~551]{higham2004numerical} 
(presented for barycentric polynomial interpolation, but equally valid for~\eqref{eq:baryr}) shows that evaluating $r(x)$ is backward stable in the sense that the computed value $\widehat r(x)$ satisfies 
\begin{equation}  \label{eq:backsta}
\widehat r(x) = \sum_{k=0}^n\dfrac{\alpha_k(1+\epsilon_{\alpha_k})}{x-t_k}\bigg/\sum_{k=0}^n\dfrac{\beta_k(1+\epsilon_{\beta_k})}{x-t_k},  
\end{equation}
where $\epsilon_{\alpha_k},\epsilon_{\beta_k}$ denote quantities of size $O(u)$, or more precisely, bounded by $(1+u)^{3n+4}$. 
In other words, $\widehat r(x)$ is an exact evaluation of~\eqref{eq:baryr} for slightly perturbed $\{\alpha_k\},\{\beta_k\}$. Note that when $r$ represents a polynomial (as assumed in~\cite{higham2004numerical}),~\eqref{eq:backsta} does not imply backward stability. 
However, as a rational function for which we allow for backward errors in the denominator,~\eqref{eq:backsta} does imply backward stability.

For the forward error, we can adapt the analysis of~\cite[Proposition 2.4.3]{celis2008practical}. Assume that the computed coefficients $\widehat{\alpha},\widehat{\beta}$ are obtained through a backward stable process, 
\[
\widehat{\alpha}_k = \alpha_k(1+\delta_{\alpha_k}),\ \delta_{\alpha_k}=O(\kappa_\alpha u),\ \widehat{\beta}_k = \beta_k(1+\delta_{\beta_k}), \ \delta_{\beta_k}=O(\kappa_\beta u), \quad k = 0,\ldots,n,
\]
where $\kappa_\alpha$ and $\kappa_\beta$ are condition numbers associated with the matrices used to determine $\widehat{\alpha}$ and $\widehat{\beta}$. Then, if $x$ (the evaluation point) and $\{t_k\}$ are considered to be floating point numbers,  we have
\begin{lemma}\label{lemma:forwardrelerror}
	The relative forward error for the computed value $\widehat{r}(x)$ of~\eqref{eq:baryr} satisfies
	\begin{equation}          \label{eq:evalerr}
	\left|\frac{r(x)-\widehat{r}(x)}{r(x)}\right|\leq u(n+3+O(\kappa_\alpha))\frac{\sum_{k=0}^{n}\left|\frac{\alpha_k}{x-t_k}\right|}{\left|\sum_{k=0}^{n}\frac{\alpha_k}{x-t_k}\right|}+u(n+2+O(\kappa_\beta))\frac{\sum_{k=0}^{n}\left|\frac{\beta_k}{x-t_k}\right|}{\left|\sum_{k=0}^{n}\frac{\beta_k}{x-t_k}\right|}+O(u^2).          
	\end{equation}
\end{lemma}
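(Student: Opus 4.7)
The plan is to combine the backward stability result~\eqref{eq:backsta} for the evaluation scheme with the assumed relative errors $\widehat{\alpha}_k = \alpha_k(1+\delta_{\alpha_k})$, $\widehat{\beta}_k = \beta_k(1+\delta_{\beta_k})$ in the computed coefficients. I would first introduce the intermediate quantities
\begin{equation*}
\widetilde N(x) = \sum_{k=0}^n \frac{\widehat{\alpha}_k(1+\epsilon_{\alpha_k})}{x-t_k}, \qquad
\widetilde D(x) = \sum_{k=0}^n \frac{\widehat{\beta}_k(1+\epsilon_{\beta_k})}{x-t_k},
\end{equation*}
so that \eqref{eq:backsta} asserts the \emph{exact} identity $\widehat{r}(x) = \widetilde N(x)/\widetilde D(x)$. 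Then I would absorb both perturbation layers into a single factor: writing $\widehat{\alpha}_k(1+\epsilon_{\alpha_k}) = \alpha_k(1+\tau_{\alpha_k})$ with $\tau_{\alpha_k} = \delta_{\alpha_k}+\epsilon_{\alpha_k}+\delta_{\alpha_k}\epsilon_{\alpha_k}$, one obtains from \eqref{eq:backsta} and the hypothesis on $\delta_{\alpha_k}$ that $|\tau_{\alpha_k}| \leq (n+3+O(\kappa_\alpha))\,u + O(u^2)$, and analogously $|\tau_{\beta_k}| \leq (n+2+O(\kappa_\beta))\,u + O(u^2)$.

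Next I would use the algebraic identity
\begin{equation*}
\frac{\widehat{r}(x) - r(x)}{r(x)} = \frac{\widetilde N(x) D(x) - N(x) \widetilde D(x)}{N(x)\widetilde D(x)} = \frac{\widetilde N - N}{N}\cdot \frac{D}{\widetilde D} - \frac{\widetilde D - D}{\widetilde D},
\end{equation*}
obtained by adding and subtracting $ND$ in the numerator. Since $\widetilde D(x) = D(x)(1+O(u))$, the two ``denominator correction'' factors both equal $1+O(u)$ and contribute only to the $O(u^2)$ remainder. Applying the triangle inequality to
\begin{equation*}
\widetilde N(x) - N(x) = \sum_{k=0}^n \frac{\alpha_k \tau_{\alpha_k}}{x - t_k}
\end{equation*}
and to the analogous expression for $\widetilde D - D$ gives the bound
\begin{equation*}
\left| \widetilde N(x) - N(x) \right| \leq u(n+3 + O(\kappa_\alpha))\sum_{k=0}^n \frac{|\alpha_k|}{|x-t_k|} + O(u^2),
\end{equation*}
and the analogous bound for $|\widetilde D - D|$. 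Dividing by $|N(x)|$ and $|D(x)|$ respectively, adding the two contributions, and collecting all cross-terms into $O(u^2)$ yields \eqref{eq:evalerr} directly.

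The only real subtlety is bookkeeping: one has to check that the distinct constants $n+3$ and $n+2$ in \eqref{eq:evalerr} are carried over from the backward-error analysis of \eqref{eq:backsta} (where Higham's count gives one fewer rounding in the denominator sum than in the numerator sum), while the $O(\kappa_\alpha)$ and $O(\kappa_\beta)$ come solely from the coefficient errors $\delta_{\alpha_k},\delta_{\beta_k}$. Separating $\tau_{\alpha_k}$ and $\tau_{\beta_k}$ into their two components as above makes this bookkeeping immediate; all remaining steps are triangle inequalities and first-order Taylor expansions.
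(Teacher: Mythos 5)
The paper's own proof is a one-line citation to~\cite[Prop.~2.4.3]{celis2008practical}, so you are not replicating its argument but supplying one that the paper omits. Your decomposition via the exact identity
$\frac{\widehat r - r}{r} = \frac{\widetilde N - N}{N}\cdot\frac{D}{\widetilde D} - \frac{\widetilde D - D}{\widetilde D}$,
followed by triangle inequalities and absorbing the $\delta$--$\epsilon$ cross-terms into the $O(u^2)$ remainder, is a correct and natural way to convert the backward statement into a componentwise forward bound, and it produces exactly the two conditioning ratios $\sum_k|\alpha_k/(x-t_k)|/|\sum_k\alpha_k/(x-t_k)|$ and $\sum_k|\beta_k/(x-t_k)|/|\sum_k\beta_k/(x-t_k)|$ appearing in~\eqref{eq:evalerr}.

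The one real gap is the constants, which you flag yourself but then dismiss incorrectly. You assert $|\tau_{\alpha_k}|\le(n+3+O(\kappa_\alpha))u+O(u^2)$ and $|\tau_{\beta_k}|\le(n+2+O(\kappa_\beta))u+O(u^2)$ as if they were read off from~\eqref{eq:backsta}, but~\eqref{eq:backsta} as stated in the paper bounds every $\epsilon_{\alpha_k},\epsilon_{\beta_k}$ uniformly by $(1+u)^{3n+4}-1\approx(3n+4)u$; pushing that through your identity would give $(3n+4+O(\kappa_\alpha))u$ and $(3n+4+O(\kappa_\beta))u$, not $n+3$ and $n+2$. To get the finer constants you must redo the rounding count directly on the evaluation scheme (one subtraction and one division per term, $n$ accumulations in each sum, and a final quotient), rather than appeal to~\eqref{eq:backsta}. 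Your explanation that ``Higham's count gives one fewer rounding in the denominator sum than in the numerator sum'' is also not quite right for the representation~\eqref{eq:baryr}: both partial-fraction sums incur the same number of roundings, and the $n+3$ vs.~$n+2$ split comes entirely from attributing the single final division $N/D$ to the numerator side. This is a small, fixable gap---the structure of your argument is sound and the remaining work is an elementary rounding count---but as written, the constants do not follow from the hypothesis you invoke.
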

\begin{proof}
	This follows from~\cite[Prop.~2.4.3]{celis2008practical}.
\end{proof}

If the functions $|D(x)|$ and $|N(x)|$ appearing in the denominators of the right-hand side of~\eqref{eq:evalerr} do not become too small over $[a,b]$, then we can expect the evaluation of $\widehat{r}$ to be accurate. Note that $|D(x)|$ is precisely the quantity examined in Section~\ref{sec:whybary}, where we argued that it takes values $O(1)$ or larger across the interval. 
Further, since $r(x)\approx f(x)$ implies $|N(x)|\approx |D(x)f(x)|$, 
we see that $|N(x)|$ is not too small unless $|f(x)|$ is small. 
Put together, we expect the barycentric evaluation phase to be stable unless $|f(x)|$ (and hence $|r(x)|$) is small. 
Note that since~\eqref{eq:evalerr} measures the relative error, we usually cannot expect it to be $O(u)$ when $|r(x)|\approx |f(x)|\ll 1$.

\section{The rational Remez algorithm}\label{sec:remezbasics}

Initially developed by Werner~\cite{werner1962a,werner1962b} and Maehly~\cite{maehly1963methods}, the rational Remez algorithm extends the ideas of computing best polynomial approximations due to Remez~\cite{Rem34a,Rem34b}. It can be summarized as follows:
\smallskip

\begin{itemize}
	\item[\textbf{Step 1}] 	Set $k=1$ and choose $m+n+2$ distinct reference points
	\[
	a\leq x_0^{(k)}<\cdots<x_{m+n+1}^{(k)}\leq b.
	\]

\smallskip
	\item[\textbf{Step 2}] Determine the \emph{levelled error} $\lambda_k\in\mathbb{R}$ (positive or negative) and $r_k\in\mathcal{R}_{m,n}$ such that $r_k$ has no pole on $[a,b]$ and
	\begin{equation}\label{eq:equiosc}
	f(x_\ell^{(k)})-r_k(x_\ell^{(k)})=(-1)^{\ell+1}\lambda_k, \qquad \ell=0,\ldots,m+n+1.
	\end{equation}

\smallskip
	\item[\textbf{Step 3}] Choose as the next reference
$m+n+2$ local maxima $\{ x_\ell^{(k+1)}\}$ of $\left\vert f-r_k\right\vert$ such that
	\begin{equation}\label{eq:newaltern}
	s(-1)^\ell\left(f(x_\ell^{(k+1)})-r_k(x_\ell^{(k+1)})\right)\geq \left|\lambda_k\right|, \qquad \ell=0,\ldots,m+n+1,
	\end{equation}
	with $s\in\left\lbrace\pm 1\right\rbrace$ and such that for at least one $\ell\in\left\lbrace 0,\ldots,m+n+1\right\rbrace$, the left-hand side of~\eqref{eq:newaltern} equals $\left\Vert f-r_k\right\Vert_{\infty}$. If $r_k$ has converged to within a given threshold $\varepsilon_t>0$ (i.e., $(\left\Vert f-r_k\right\Vert_{\infty}-\lambda_k)/\left\Vert f-r_k\right\Vert_{\infty}\leq\varepsilon_t$~\cite[eq.~(10.8)]{powell1981approximation}) return $r_k$, else go to \textbf{Step 2} with $k\leftarrow k+1$.
\end{itemize}

If Step 2 is always successful, then convergence to the best approximation is assured~\cite[Theorem~9.14]{watson1980approximation}. 
It might happen that Step 2 fails, namely when all rational solutions satisfying the equations~\eqref{eq:equiosc} have poles in $[a,b]$. If the best approximation is non-degenerate and the initial reference set is already sufficiently close to optimal, then the algorithm will converge~\cite[\S V.6.B]{braess2012nonlinear}. To our knowledge, there is no effective way in general to determine when degeneracy is the cause of failure. 

We note that the rational Remez algorithm can also be adapted to work in the case of~\emph{weighted} best rational approximation. An early account of this is given in~\cite{curtis1966theory}. Given a positive weight function $w\in\mathcal{C}([a,b])$, the goal is to find $r^*\in\mathcal{R}_{m,n}$ such that the weighted error $\|f-r^*\|_{w,\infty}=\max_{x\in[a,b]}|w(x)(f(x)-r^*(x))|$ is minimal. Equations~\eqref{eq:equiosc} and~\eqref{eq:newaltern} get modified to
\[
	w(x_\ell^{(k)})\left(f(x_\ell^{(k)})-r_k(x_\ell^{(k)})\right)=(-1)^{\ell+1}\lambda_k, \qquad \ell=0,\ldots,m+n+1
\]
and
\[
	s(-1)^\ell w(x_\ell^{(k+1)})\left(f(x_\ell^{(k+1)})-r_k(x_\ell^{(k+1)})\right)\geq \left|\lambda_k\right|, \qquad \ell=0,\ldots,m+n+1,
\]
while the norm computations in Step 3 are taken with respect to $w$. Notice that the ability to work with the weighted error immediately allows us to compute the best approximation in the \emph{relative} sense, by taking $w(x)=1/|f(x)|$, assuming that $f$ is nonzero over $[a,b]$.

We discuss each step of the rational Remez algorithm in the following sections. We first address Step 2, as this is the core part where the barycentric representation is used. 
We then discuss initialization (Step 1) in Section~\ref{sec:initialization}, and finding the next reference set (Step 3) in Section~\ref{sec:nextrefs}. Our focus is on the unweighted setting, but we comment on how our ideas can be extended to the weighted case as well.

\section{Computing the trial approximation}\label{sec:main}
For notational simplicity, in this section we drop the index $k$ referring to the iteration number, the analysis being valid for any iteration of the rational Remez algorithm. We begin with the case $m=n$.

\subsection{Linear algebra in a polynomial basis}\label{sec:diag}
We first derive the Remez algorithm in an (arbitrary) polynomial basis.
At each iteration, we search for 
$r=p/q\in\mathcal{R}_{n,n}, p,q\in\mathbb{R}_n[x]$
 such that
\begin{equation}\label{eq:altern}
f(x_\ell)-r(x_\ell)=(-1)^{\ell+1}\lambda, \qquad \ell=0,\ldots,2n+1
\end{equation}
and assume that we represent $p$ and $q$ using a basis of polynomials $\varphi_0,\ldots,\varphi_n$ such that 
$\textnormal{span}_\mathbb{R}\left(\varphi_i\right)_{0\leq i\leq n}=\mathbb{R}_n[x]$:
\[p(x)=\sum_{k=0}^{n}c_{p,k}\varphi_k(x),\quad  q(x)=\sum_{k=0}^{n}c_{q,k}\varphi_k(x).\]
The linearized version of~\eqref{eq:altern} is then given by
\begin{equation*}
p(x_\ell)=q(x_\ell)\left(f(x_\ell)-(-1)^{\ell+1}\lambda\right),
\end{equation*}
which, in matrix form, becomes
\begin{equation}\label{eq:matrixaltern}
\Phi_xc_p = \left(\begin{bmatrix}
f(x_0) & & & \\
& f(x_1) & & \\
& & \ddots & \\
& & & f(x_{2n+1}) 
\end{bmatrix}-\lambda\begin{bmatrix}
-1 & & & \\
& 1 & & \\
& & -1 & \\
& & & \ddots
\end{bmatrix}\right)\Phi_xc_q,
\end{equation}
where $\Phi_x\in\mathbb{R}^{(2n+2)\times(n+1)}$ is the basis matrix
$\left(\Phi_x\right)_{\ell,k}=\varphi_k(x_\ell), 0\leq\ell\leq 2n+1,0\leq k\leq n$, and $c_p=[c_{p,0},c_{p,1},\ldots,c_{p,n}]^T$ and $c_q=[c_{q,0},c_{q,1},\ldots,c_{q,n}]^T$ are the coefficient vectors of $p$ and $q$. Note that in this paper, vector and matrix indices always start at zero. Up to multiplying both sides on the left by a nonsingular diagonal matrix $D=\diag\left(d_0,\ldots,d_{2n+1}\right)$, \eqref{eq:matrixaltern} can also be written as a generalized eigenvalue problem
\begin{equation}\label{eq:linaltern2}
\begin{bmatrix}
D\Phi_x & -FD\Phi_x
\end{bmatrix}\begin{bmatrix}
c_p \\
c_q
\end{bmatrix} = \lambda\begin{bmatrix}
0 & -SD\Phi_x
\end{bmatrix}\begin{bmatrix}
c_p \\
c_q
\end{bmatrix},
\end{equation}
with 
$F=\diag\left(f(x_0),\ldots,f(x_{2n+1})\right)$ and $S=\diag\left((-1)^{k+1}\right)$.

As described in Powell~\cite[Ch.~10.2]{powell1981approximation}, solving~\eqref{eq:linaltern2} is usually done by eliminating $c_p$. His presentation considers the monomial basis, but the approach is valid for any basis of $\mathbb{R}_n[x]$. By taking the full QR decomposition of $D\Phi_x$, we get
\[
D\Phi_x =\begin{bmatrix}
Q_1 & Q_2
\end{bmatrix}\begin{bmatrix}
R \\
0
\end{bmatrix} = Q_1R.
\]
Since $D\Phi_x$ is of full rank, we have $Q_1,Q_2\in\mathbb{R}^{(2n+2)\times (n+1)}$ and $Q_2^TQ_1=0$. By multiplying~\eqref{eq:linaltern2} on the left by $Q^T=\begin{bmatrix}
Q_1 & Q_2
\end{bmatrix}^T$, we 
obtain a block triangular eigenvalue problem with 
lower-right $(n+1)\times (n+1)$ block
\begin{equation}\label{eq:geneigproj}
Q_2^TFQ_1Rc_q=\lambda Q_2^TSQ_1Rc_q.
\end{equation}
(The top-left $(n+1)\times (n+1)$ block has all eigenvalues at infinity, and is thus irrelevant.) 
In terms of polynomials, 
$(Q_1)_{\ell,k}=d_\ell\psi_k(x_\ell)$, $0\leq k\leq n, 0\leq\ell\leq 2n+1$,
 where $(\psi_k)_{0\leq k\leq n}$ is a family of orthonormal polynomials with respect to the discrete inner product
$\left\langle f,g\right\rangle_x=\sum_{k=0}^{2n+1}d_k^2f(x_k)\overline{g(x_k)}$. 
Moreover, if $(\varphi_k)_{0\leq k\leq n}$ is a degree-graded basis with $\deg\varphi_k=k$, then we have $\deg\psi_k=k,0\leq k\leq n$.

Let $\omega_x$ be the node polynomial associated with the reference nodes $x_0,\ldots,x_{2n+1}$, and 
$\Omega_x=\diag\left(1/\omega_x'(x_0),\ldots,1/\omega_x'(x_{2n+1})\right)$. We have~\cite[p.~114]{powell1981approximation}
\begin{equation}\label{eq:vandEq}
V_x^T\Omega_x V_x=0,
\end{equation}
where $V_x\in\mathbb{R}^{(2n+2)\times(n+1)}$ is the Vandermonde matrix associated with $x_0,\ldots,x_{2n+1}$, that is, $(V_x)_{i,j} = x_{i}^{j}$.
Indeed,
\[
\left(V_x^T\Omega_xV_x\right)_{i,j}=\sum_{\ell=0}^{2n+1}x_\ell^{i+j}\frac{1}{\omega_x'(x_\ell)}=(x^{i+j})[x_0,\ldots,x_{2n+1}]=0, \quad i,j\in\left\lbrace 0,\ldots,n\right\rbrace,
\]
the divided differences of order $2n+1$ of the function $x^{i+j}$ at the $\left\lbrace x_\ell\right\rbrace$ nodes, hence $0$ if $i+j\leq 2n$. 

By using the appropriate change of basis matrix in~\eqref{eq:vandEq}, we have
\begin{equation}\label{eq:genEq}
	\Phi_x^T\Omega_x \Phi_x=0.
\end{equation}
Now, by multiplying~\eqref{eq:linaltern2} on the left by $\Phi_x^T\Omega_xD^{-1}$ and using~\eqref{eq:genEq}, we can eliminate the $c_p$ term to obtain
\begin{equation}\label{eq:newprojaltern}
\Phi_x^T\Omega_x
F\Phi_xc_q = \lambda \Phi_x^T\Omega_x
S\Phi_xc_q. 
\end{equation}
Equation~\eqref{eq:newprojaltern} is the extension of~\cite[Eq.~(10.13)]{powell1981approximation} from the monomial basis to $\varphi_0,\ldots,\varphi_n$. Moreover, we have:
\begin{lemma}\label{lemma:1}
	The matrix $\Phi_x^T\Omega_xS\Phi_x$ is symmetric positive definite.
\end{lemma}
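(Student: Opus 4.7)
The proof has two parts: symmetry and positive definiteness, both of which reduce to understanding the diagonal matrix $\Omega_x S$.

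My first step will be to compute the sign of $\omega_x'(x_\ell) = \prod_{j\neq \ell}(x_\ell - x_j)$. Because the reference nodes are ordered $x_0 < x_1 < \cdots < x_{2n+1}$, exactly $2n+1-\ell$ of the factors $x_\ell - x_j$ are negative, so $\omega_x'(x_\ell)$ has sign $(-1)^{2n+1-\ell} = (-1)^{\ell+1}$. Consequently,
\[
(\Omega_x S)_{\ell\ell} \;=\; \frac{(-1)^{\ell+1}}{\omega_x'(x_\ell)} \;=\; \frac{1}{|\omega_x'(x_\ell)|} \;>\; 0.
\]
So $\Omega_x S$ is a positive diagonal matrix; in particular it is symmetric, and therefore $\Phi_x^T \Omega_x S \Phi_x$ is symmetric as well.

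For positive definiteness, I will pick an arbitrary nonzero vector $v = [v_0, \ldots, v_n]^T \in \mathbb{R}^{n+1}$ and write
\[
v^T \Phi_x^T \Omega_x S \Phi_x v \;=\; \sum_{\ell=0}^{2n+1} \frac{(\Phi_x v)_\ell^2}{|\omega_x'(x_\ell)|}.
\]
Since the $\varphi_k$ form a basis of $\mathbb{R}_n[x]$, the polynomial $p := \sum_{k=0}^n v_k \varphi_k$ is a nonzero element of $\mathbb{R}_n[x]$, and $(\Phi_x v)_\ell = p(x_\ell)$. A nonzero polynomial of degree at most $n$ cannot vanish at all $2n+2$ distinct nodes $x_0, \ldots, x_{2n+1}$, so $\Phi_x v \neq 0$ and the sum is strictly positive.

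The main (minor) obstacle is just getting the sign calculation right; after that the argument is just a matter of recognizing $\Phi_x v$ as polynomial evaluations and invoking unisolvence. Note that the conclusion depends only on the ordering of the nodes and the basis property of $\{\varphi_k\}$, not on the particular choice of basis, which will be useful later when specializing to the barycentric/partial-fraction basis.
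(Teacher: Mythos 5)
Your proof is correct and follows essentially the same route as the paper's: the paper notes compactly that $\Omega_x S = |\Omega_x|$ is a positive diagonal matrix (which is exactly your sign computation) and that positive definiteness then follows because $\Phi_x$ has full column rank. You have simply written out the sign bookkeeping and the unisolvence/rank step that the paper leaves implicit.
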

\begin{proof}
	Since $\Omega_xS=\left|\Omega_x\right|$, it means that $\Omega_xS$ is symmetric positive definite, and the conclusion follows. See also~\cite[Theorem~10.2]{powell1981approximation}.
\end{proof}

Since $\Phi_x^T\Omega_xF\Phi_x$ is also symmetric, it follows that all eigenvalues of~\eqref{eq:newprojaltern} are real and at most one eigenvector $c_q$ corresponds to a pole-free solution $r$ (i.e., $q$ has no root on $[a,b]$). To see this, suppose to the contrary that there exists another pole-free solution $r'$. Then, from~\eqref{eq:altern}, it follows that either $r(x_k)-r'(x_k)$ are all zero or they alternate in sign at least $2n+1$ times. In both cases, $r-r'\in\mathcal{R}_{2n,2n}$ has at least $2n+1$ zeros inside $[a,b]$, leading to $r=r'$.

We can in fact transform~\eqref{eq:geneigproj} into a symmetric eigenvalue problem (an observation which seems to date to~\cite{pelios1967rational}) by considering the choice $D=\left|\Omega_x\right|^{1/2}$, which leads to $Q_2=SQ_1$ in view of~\eqref{eq:genEq}. The system becomes $Q_1^TSFQ_1Rc_q = \lambda Q_1^TS^2Q_1Rc_q,$
which, by the change of variables $y=Rc_q$, gives
\begin{equation}\label{eq:simeig}
Q_1^TSFQ_1y=\lambda y.
\end{equation}

To get $c_p$, from~\eqref{eq:matrixaltern}, we have
$\left|\Omega_x\right|^{1/2}\Phi_xc_p=(F-\lambda S)\left|\Omega_x\right|^{1/2}\Phi_xc_q,$
or equivalently (by multiplication on the left by $Q_1^T$),
\begin{equation*}
Rc_p = Q_1^T(F-\lambda S)\left|\Omega_x\right|^{1/2}\Phi_xc_q = Q_1^TFQ_1y.
\end{equation*}
The vectors $Rc_p$ and $Rc_q$ can be seen as vectors of coefficients of the numerator and denominator of $r$ in the orthogonal basis $\psi_0,\ldots,\psi_n$. The (scaled) values of the denominator at each $x_k$ corresponding to an eigenvector $y$ can be recovered by computing 
\begin{equation}\label{eq:valqpoly}
\left|\Omega_x\right|^{1/2}\Phi_xc_q=Q_1y.  
\end{equation}
From this we can confirm the uniqueness of the pole-free solution:
since the eigenvectors are orthogonal, there is at most one generating a vector of denominator values of the same sign, making it the only pole-free solution candidate. 

\subsection{Linear algebra in a barycentric basis}\label{sec:bary}
An equivalent analysis is valid if we take $r$ in the barycentric form~\eqref{eq:baryr}. 
Namely,~\eqref{eq:altern} becomes
\begin{equation}\label{eq:baryaltern}
C\alpha = \left(\begin{bmatrix}
f(x_0) & & & \\
& f(x_1) & & \\
& & \ddots & \\
& & & f(x_{2n+1})
\end{bmatrix}-\lambda\begin{bmatrix}
-1 & & & \\
& 1 & & \\
& & -1 & \\
& & & \ddots
\end{bmatrix}\right)C\beta,
\end{equation}
where $C$ is now a $(2n+2)\times(n+1)$ Cauchy matrix with entries $C_{\ell,k}=1/(x_\ell-t_k)$ (we assume for the moment $\{x_\ell\}\cap\{t_k\}=\varnothing$)
and $\alpha=[\alpha_0,\alpha_1,\ldots,\alpha_n]^T$ and $\beta=[\beta_0,\beta_1,\ldots,\beta_n]^T$ are the column vectors of coefficients $\left\lbrace\alpha_k\right\rbrace$ and $\left\lbrace\beta_k\right\rbrace$. Again, this can be  transformed into a generalized eigenvalue problem
\begin{equation}\label{eq:geneigcauchy}
\begin{bmatrix}C & -FC\end{bmatrix}\begin{bmatrix}
\alpha \\
\beta
\end{bmatrix}=\lambda\begin{bmatrix}
0 & -SC
\end{bmatrix}\begin{bmatrix}
\alpha \\
\beta
\end{bmatrix}.
\end{equation}
To reduce~\eqref{eq:geneigcauchy} to a symmetric eigenvalue problem as in~\eqref{eq:simeig}, we form a link between the monomial and barycentric representations in terms of the basis matrices $V_x$ and $C$. We have:
\begin{lemma}\label{lemma:2}
Let 
$V_x$, $\omega_t$ be as defined above, and 
$V_t\in\mathbb{R}^{(n+1)\times(n+1)}$ be
  the Vandermonde matrix corresponding to the support points, i.e., $(V_t)_{i,j} = t_{i}^{j}$. Then
	\begin{equation*}\label{eq:barymonom}
	\diag\left(\frac{1}{\omega_t(x_0)},\ldots,\frac{1}{\omega_t(x_{2n+1})}\right)V_x=C\diag\left(\frac{1}{\omega_t'(t_0)},\ldots,\frac{1}{\omega_t'(t_{n})}\right)V_t.
	\end{equation*}
\end{lemma}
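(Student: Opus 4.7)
The plan is to reduce the matrix identity to the entrywise statement
\[
\frac{x_\ell^j}{\omega_t(x_\ell)} \;=\; \sum_{k=0}^{n}\frac{1}{x_\ell-t_k}\cdot\frac{t_k^j}{\omega_t'(t_k)}, \qquad 0\le \ell\le 2n+1,\ 0\le j\le n,
\]
and to establish this by a single partial fraction expansion applied to the monomials $x^j$.

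First I would verify that the left- and right-hand sides of the displayed formula coincide with the $(\ell,j)$ entries of
$\diag(1/\omega_t(x_\ell))\,V_x$ and $C\,\diag(1/\omega_t'(t_k))\,V_t$, respectively. For the left-hand side, $(V_x)_{\ell,j}=x_\ell^j$ gets multiplied by $1/\omega_t(x_\ell)$, so the $(\ell,j)$ entry is $x_\ell^j/\omega_t(x_\ell)$. For the right-hand side,
\[
\bigl(C\,\diag(1/\omega_t'(t_k))\,V_t\bigr)_{\ell,j} \;=\; \sum_{k=0}^{n} C_{\ell,k}\cdot\frac{1}{\omega_t'(t_k)}\cdot (V_t)_{k,j} \;=\; \sum_{k=0}^n \frac{t_k^j}{\omega_t'(t_k)(x_\ell-t_k)},
\]
which matches the right-hand side of the displayed scalar identity.

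The core step is the scalar identity itself. For fixed $0\le j\le n$, the rational function $z\mapsto z^j/\omega_t(z)$ is proper (numerator degree $j\le n$, denominator degree $n+1$) with simple poles at $t_0,\ldots,t_n$ (the $t_k$ being distinct). Standard partial fractions then gives
\[
\frac{z^j}{\omega_t(z)} \;=\; \sum_{k=0}^{n}\frac{c_{k,j}}{z-t_k}, \qquad c_{k,j} \;=\; \lim_{z\to t_k}\frac{(z-t_k)\,z^j}{\omega_t(z)} \;=\; \frac{t_k^j}{\omega_t'(t_k)},
\]
where the residue is computed via L'Hôpital (or by factoring $\omega_t(z)=(z-t_k)\prod_{i\neq k}(z-t_i)$ and evaluating at $z=t_k$). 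Specializing $z=x_\ell$, which is legitimate under the assumption $\{x_\ell\}\cap\{t_k\}=\varnothing$ already made in~(\ref{eq:baryaltern}), yields the required entrywise equality.

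I do not anticipate a real obstacle: the content is just the partial fraction expansion of a proper rational function with simple poles, and the matrix form is a bookkeeping step. The only thing worth double-checking is the column range $0\le j\le n$ (so that the degree bound $j\le n<n+1=\deg\omega_t$ holds), which exactly matches the number of columns in $V_x$ and $V_t$.
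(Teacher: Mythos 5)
Your proof is correct and takes essentially the same approach as the paper: both reduce the matrix identity to the entrywise scalar identity and then invoke the same fact, which you derive via the partial-fraction expansion of $z^j/\omega_t(z)$ and the paper attributes to the Lagrange interpolation formula applied to $x^j$ at the nodes $\{t_k\}$ (these are the same statement rearranged).
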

\begin{proof}
	If we look at an arbitrary element of the right-hand side matrix, we have
	\[
	\left(C\diag\left(\frac{1}{\omega_t'(t_0)},\ldots,\frac{1}{\omega_t'(t_n)}\right)V_t\right)_{j,\ell}=\sum_{k=0}^{n}\frac{1}{(x_j-t_k)\omega_t'(t_k)}t_k^\ell=\frac{x_j^\ell}{\omega_t(x_j)},
	\]
	where the second equality is a consequence of the Lagrange interpolation formula.
\end{proof}


In place of $\Omega_x$ we will use the following matrix $\Delta$:
\begin{lemma}\label{lemma:3}
	If $\Delta=\diag\left(\omega_t(x_0)^2,\ldots,\omega_t(x_{2n+1})^2\right)\Omega_x$, then $C^T\Delta C=0$.
\end{lemma}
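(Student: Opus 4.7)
The plan is to reduce the claim to the already-established Vandermonde identity $V_x^T \Omega_x V_x = 0$ from equation (4.7), using Lemma 3.2 as the bridge between the monomial basis matrix $V_x$ and the Cauchy (barycentric) matrix $C$.

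First I would set $W = \diag(\omega_t(x_0), \ldots, \omega_t(x_{2n+1}))$ and $D_t = \diag(1/\omega_t'(t_0), \ldots, 1/\omega_t'(t_n))$, so that Lemma 3.2 reads $W^{-1} V_x = C D_t V_t$, or equivalently
\begin{equation*}
V_x = W\,C\,D_t\,V_t.
\end{equation*}
Substituting this into the identity $V_x^T \Omega_x V_x = 0$ gives
\begin{equation*}
V_t^T D_t^T \, C^T\, (W \Omega_x W)\, C\, D_t V_t = 0.
\end{equation*}
The middle factor simplifies: $W \Omega_x W$ is diagonal with entries $\omega_t(x_\ell)^2 / \omega_x'(x_\ell)$, which by definition equals $\Delta$. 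Thus
\begin{equation*}
V_t^T D_t \, C^T \Delta C \, D_t V_t = 0.
\end{equation*}

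Finally I would observe that $V_t \in \mathbb{R}^{(n+1)\times(n+1)}$ is a square Vandermonde matrix at the distinct support points $t_0, \ldots, t_n$, hence nonsingular, and $D_t$ is diagonal with nonzero entries (since distinctness of the $t_k$ forces $\omega_t'(t_k) \neq 0$). Multiplying both sides of the above by $(D_t V_t)^{-T}$ on the left and $(D_t V_t)^{-1}$ on the right yields $C^T \Delta C = 0$, as claimed.

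There is essentially no obstacle once Lemma 3.2 is in hand; the only point that deserves care is justifying the cancellation step, which relies on distinctness of the $t_k$ (to get invertibility of both $D_t$ and $V_t$) and on the fact that $V_t$ is square rather than rectangular --- this is what lets us recover the full equality $C^T \Delta C = 0$ rather than some weaker projected version.
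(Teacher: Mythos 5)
Your proof is correct and follows essentially the same route as the paper: both use Lemma~\ref{lemma:2} together with the Vandermonde identity $V_x^T\Omega_x V_x=0$. The paper solves Lemma~\ref{lemma:2} for $C$ and substitutes directly into $C^T\Delta C$, while you substitute $V_x = W C D_t V_t$ into the Vandermonde identity and cancel the invertible factors $D_t V_t$; the two are algebraically equivalent.
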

\begin{proof} We apply Lemma~\ref{lemma:2} and use the fact that $V_x^T\Omega_xV_x=0$. Namely, 		
		$C^T\Delta C = \diag\left(\omega_t'(t_0),\ldots,\omega_t'(t_n)\right)V_t^{-T}V_x^T\Omega_xV_xV_t^{-1}\diag\left(\omega_t'(t_0),\ldots,\omega_t'(t_n)\right) = 0$.
\end{proof}

We now take the full QR decomposition of $\left\vert\Delta\right\vert^{1/2}C=(S\Delta)^{1/2}C$. We have
\begin{equation*}  
\left|\Delta\right|^{1/2}C=\begin{bmatrix} Q_1 & Q_2
\end{bmatrix}\begin{bmatrix}
R \\
0
\end{bmatrix}=Q_1R.
\end{equation*}
Based on Lemma~\ref{lemma:3}, we can again take $Q_2=SQ_1$. From~\eqref{eq:geneigcauchy} we get

\begin{equation*}
\begin{bmatrix}\left\vert\Delta\right\vert^{1/2}C & -F\left\vert\Delta\right\vert^{1/2}C\end{bmatrix}\begin{bmatrix}
\alpha \\
\beta
\end{bmatrix}=\lambda\begin{bmatrix}
0 & -S\left\vert\Delta\right\vert^{1/2}C
\end{bmatrix}\begin{bmatrix}
\alpha \\
\beta
\end{bmatrix}.
\end{equation*}
Multiplying this expression on the left by $\begin{bmatrix}
Q_1 & Q_2
\end{bmatrix}^T$ 
gives a block triangular matrix pencil, whose 
$(n+1)\times (n+1)$
lower-right corner is the barycentric analogue of~\eqref{eq:geneigproj}: $Q_2^TFQ_1R\beta = \lambda Q_2^TSQ_1R\beta.$
After substituting $Q_2^T=Q_1^TS$, we get
\begin{equation}  \label{eq:symbary}
Q_1^T(SF)Q_1R\beta=\lambda Q_1^TS^2Q_1R\beta,  
\end{equation}
which, by the change of variable $y=R\beta$, becomes a standard symmetric eigenvalue problem in $\lambda$ with eigenvector $y$ (recall that $S,F$ are diagonal):
\begin{equation}\label{eq:retbeta}
Q_1^T(SF)Q_1y=\lambda y.
\end{equation} 
Hence, computing its eigenvalues is a well-conditioned operation. The values of the denominator of the rational interpolant corresponding to each eigenvector $y$ can be recovered by computing 
\begin{equation}  \label{eq:valqbary}
\diag\left(\omega_t(x_0),\ldots,\omega_t(x_{2n+1})\right)C\beta=\diag\left(\omega_t(x_0),\ldots,\omega_t(x_{2n+1})\right)\left|\Delta\right|^{-1/2}Q_1y.  
\end{equation}

As in the polynomial case, 
there is at most one solution such that 
$q(x)=D(x)\omega_t(x)$ has no root in $[a,b]$; indeed,~\eqref{eq:valqpoly} and~\eqref{eq:valqbary} represent the values of $q(x_\ell)$ for $r=p/q$ and $x_\ell$ satisfying equation \eqref{eq:altern}. We use this sign test involving~\eqref{eq:valqbary} to determine the levelled error $\lambda$ that gives a pole-free $r$ in Step 2 of our rational Remez algorithm. The appropriate $\beta$ is then taken by solving $R\beta=y$.
From~\eqref{eq:baryaltern}, we have
\[
\left\vert\Delta\right\vert^{1/2}C\alpha=(F-\lambda S)\left\vert\Delta\right\vert^{1/2}C\beta,
\]
or equivalently (by multiplication on the left by $Q_1^T$)
\begin{equation}\label{eq:retalpha}
R\alpha=Q_1^T(F-\lambda S)\left\vert\Delta\right\vert^{1/2}C\beta=Q_1^T(F-\lambda S)Q_1y=Q_1^TFQ_1y,
\end{equation}
which allows us to recover $\alpha$ (and thus $r$).

Most of the derivations in this section can be carried over to the weighted approximation setting as well. In particular, the reader can check that the weighted versions of Equations~\eqref{eq:geneigcauchy} and~\eqref{eq:retbeta} correspond to
\[
\begin{bmatrix}C & -FC\end{bmatrix}\begin{bmatrix}
\alpha \\
\beta
\end{bmatrix}=\lambda\begin{bmatrix}
0 & -SW^{-1}C
\end{bmatrix}\begin{bmatrix}
\alpha \\
\beta
\end{bmatrix}
\]
and
\[
Q_1^T(SF)Q_1y=\lambda Q_1^TW^{-1}Q_1y,
\]
where $W=\diag\left(w(x_0),\ldots,w(x_{2n+1})\right)$ and all the other quantities are the same as before. While not leading to a symmetric eigenvalue problem, the symmetric and symmetric positive definite matrices appearing in the second pencil seem to suggest that the eigenproblem computations will again correspond to well-conditioned operations. Our experiments support this statement and we leave it as future work to make this rigorous. To recover $\alpha$,~\eqref{eq:retalpha} becomes $R\alpha = Q_1^T(F-\lambda SW^{-1})Q_1y$.

\subsection{Conditioning of the QR factorization}\label{sec:qrcond}
Since the above discussion makes heavy use of the matrix $Q_1$, it is desirable that computing the (thin) QR factorization $\left\vert\Delta\right\vert^{1/2}C=Q_1R$ is a well-conditioned operation. 

Here we examine the conditioning of $Q_1$, the orthogonal factor in the QR factorization of $|\Delta|^{1/2}C$, as this is the key matrix for constructing~\eqref{eq:symbary}. 
We use the fact that the standard Householder QR algorithm is invariant under column scaling, that is, it computes the same $Q_1$ for both $\left\vert\Delta\right\vert^{1/2}C$ and $\left\vert\Delta\right\vert^{1/2}C\Gamma$ for diagonal $\Gamma$~\cite[Ch.~19]{Higham:2002:ASNA}. We thus consider 
\begin{equation}  \label{eq:mincd}
\min_{\Gamma\in\mathcal{D}_{n+1}}\kappa_2(\left\vert\Delta\right\vert^{1/2}C\Gamma),   
\end{equation}
where $\mathcal{D}_{n+1}$ is the set of $(n+1)\times (n+1)$ diagonal matrices. 
We have 

\begin{theorem}\label{thm:bernd}
Let $t_k\in(x_{2k},x_{2k+1})$ for $k=0,\ldots,n$ and $s_k\in(x_{2k+1},x_{2k+2})$ for $k=0,\ldots,n-1$, $s_n\in(x_{2n+1},\infty)$,  and define $\omega_s(x)=\prod_{k=0}^n(x-s_k)$. Then 
  \begin{equation}\label{eq:berndthm}
\min_{\Gamma\in\mathcal{D}_{n+1}}\kappa_2(\left\vert\Delta\right\vert^{1/2}C\Gamma)    \leq \max_\ell\sqrt{\left|\frac{\omega_s(x_\ell)}{\omega_t(x_\ell)}\right|}\cdot \max_k\sqrt{\left|\frac{\omega_t(x_k)}{\omega_s(x_k)}\right|}. 
  \end{equation}
\end{theorem}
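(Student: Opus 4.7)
The plan is to produce a factorization $|\Delta|^{1/2} C \Gamma = D^{1/2} V$, where $V^T V = I$ and $D = \mathrm{diag}(|\omega_t(x_0)/\omega_s(x_0)|, \ldots, |\omega_t(x_{2n+1})/\omega_s(x_{2n+1})|)$. Once this is in hand the desired bound is immediate: for any unit $y \in \mathbb{R}^{n+1}$, $\|Vy\|_2 = 1$, so $\|D^{1/2} V y\|_2^2 = \sum_\ell D_{\ell\ell}(Vy)_\ell^2 \in [\min_\ell D_{\ell\ell}, \max_\ell D_{\ell\ell}]$, yielding $\kappa_2(|\Delta|^{1/2} C \Gamma) \leq \sqrt{\max_\ell D_{\ell\ell}/\min_\ell D_{\ell\ell}}$, which rearranges into precisely the right-hand side of~\eqref{eq:berndthm}.

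To identify $\Gamma$, I would compute the $(n+1)\times(n+1)$ Gram matrix $C^T(|\Delta|D^{-1})C$. The diagonal $|\Delta|D^{-1}$ has entries $|\omega_u(x_\ell)|/|\omega_x'(x_\ell)|$, where $\omega_u(x) := \omega_t(x)\omega_s(x)$ is a monic polynomial of degree $2n+2$. The interlacing $t_k \in (x_{2k}, x_{2k+1})$, $s_k \in (x_{2k+1}, x_{2k+2})$ (with $s_n$ to the right of $x_{2n+1}$) places one of $\{t_k, s_k\}$ in each open interval $(x_\ell, x_{\ell+1})$, so a parity count gives $\mathrm{sign}(\omega_u(x_\ell)) = (-1)^\ell$ and $\mathrm{sign}(\omega_x'(x_\ell)) = (-1)^{\ell+1}$; consequently $\omega_u(x_\ell)/\omega_x'(x_\ell)$ is strictly negative for every $\ell$. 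Each entry of the Gram matrix is therefore a $(2n+1)$-st divided difference,
\[
(C^T(|\Delta|D^{-1})C)_{k,k'} \;=\; -\bigl(\omega_u(x)/((x-t_k)(x-t_{k'}))\bigr)[x_0,\ldots,x_{2n+1}].
\]

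The central computation is evaluating this divided difference. For $k \neq k'$ the argument simplifies to the polynomial $\omega_s(x)\prod_{j\neq k,k'}(x - t_j) \in \mathbb{R}_{2n}[x]$, whose $(2n+1)$-st divided difference vanishes, so the off-diagonal Gram entries are zero. For $k = k'$ I would extract the principal part at the double pole by writing $\omega_u(x)/(x-t_k)^2 = g(t_k)/(x-t_k) + r(x)$ with $g(x) := \omega_s(x)\prod_{j\neq k}(x - t_j)$ and $r \in \mathbb{R}_{2n}[x]$; combined with the classical identity $(1/(x-t))[x_0, \ldots, x_{2n+1}] = -1/\omega_x(t)$ (immediate from the partial fraction expansion of $1/\omega_x$), this produces the diagonal value $\omega_s(t_k)\omega_t'(t_k)/\omega_x(t_k)$. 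A parity check on the number of $t_j$, $s_j$, and $x_\ell$ lying on each side of $t_k$ confirms this value is strictly positive, so $\Gamma_{kk} := \sqrt{\omega_x(t_k)/(\omega_s(t_k)\omega_t'(t_k))}$ is real and yields $\Gamma^T (C^T(|\Delta|D^{-1})C) \Gamma = I$. Setting $V := D^{-1/2}|\Delta|^{1/2} C \Gamma$ gives $V^T V = I$ and completes the factorization.

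The main obstacle is the diagonal case $k = k'$: assembling the residue extraction at the double pole, invoking the divided difference formula for $1/(x - t)$, and verifying the positivity of $\omega_s(t_k)\omega_t'(t_k)/\omega_x(t_k)$ through careful sign bookkeeping on the interlacing pattern. The off-diagonal vanishing is a pure degree count, and the final condition-number bound is the standard sandwich estimate for the singular values of $D^{1/2} V$ when $V$ has orthonormal columns.
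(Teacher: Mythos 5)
Your proof is correct and is essentially the same as the paper's: both evaluate the entries of a diagonally scaled Gram matrix of $C$ as $(2n+1)$-st divided differences, use a degree count to kill the off-diagonal entries, and use an interlacing sign count to get a positive diagonal, yielding a factorization $|\Delta|^{1/2}C\Gamma = (\mathrm{diag}) \cdot (\text{orthonormal columns})$ from which the bound is immediate. The only cosmetic difference is that the paper first shows the full square $(2n+2)\times(2n+2)$ scaled Cauchy matrix $D_1C_{x,y}D_2$ (with $\{y_j\}=\{t_k\}\cup\{s_k\}$) is orthogonal and then extracts every other column, whereas you compute the $(n+1)\times(n+1)$ Gram matrix of the rectangular $|\Delta|^{1/2}D^{-1/2}C$ directly and read off $\Gamma$ from its diagonal.
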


\begin{proof}
Let $\{y_j\}$ be a $(2n+2)$-element set such that $y_j\in(x_j,x_{j+1}), j=0,\ldots,2n$, $y_{2n+1} > x_{2n+1}$ and let $C_{x,y}\in\mathbb{R}^{(2n+2)\times (2n+2)}$ be the Cauchy matrix with elements $(C_{x,y})_{j,k}=1/(x_j-y_k)$.
If we consider $D_1=\mbox{diag}(\sqrt{\left|\omega_y(x_j)/\omega_x'(x_j)\right|})$ and $D_2=\mbox{diag}(\sqrt{\left|\omega_x(y_j)/\omega_y'(y_j)\right|})$,
then the matrix $D_1C_{x,y}D_2$ is orthogonal. This follows, for instance, if we examine the elements of its associated Gram matrix $G$ and use divided differences. Indeed, for an arbitrary element $(G)_{j,k}$ with $j\neq k$, we have
\begin{align*}
-(G)_{j,k}&= \sqrt{\left|\frac{\omega_x(y_j)\omega_x(y_k)}{\omega_y'(y_j)\omega_y'(y_k)}\right|}\sum_{\ell=0}^{2n+1}\frac{\omega_y(x_\ell)}{(x_\ell-y_j)(x_\ell-y_k)\omega_x'(x_\ell)} \\
&=\sqrt{\left|\frac{\omega_x(y_j)\omega_x(y_k)}{\omega_y'(y_j)\omega_y'(y_k)}\right|}\left(\frac{\omega_y(x)}{(x-y_j)(x-y_k)}\right)[x_0,\ldots,x_{2n+1}]=0.
\end{align*}
Similarly, since $\prod_{j\neq k}(x-y_j)=q(x)(x-y_k)+\omega_y'(y_k)$, with $q\in\mathbb{R}_{2n}[x]$, we have,
\begin{align*}
-(G)_{k,k}&=\frac{\omega_x(y_k)}{\omega_y'(y_k)}\sum_{\ell=0}^{2n+1}\frac{\omega_y(x_\ell)}{(x_\ell-y_k)^2\omega_x'(x_\ell)}
=\frac{\omega_x(y_k)}{\omega_y'(y_k)}\left(\frac{\prod_{j\neq k}(x-y_j)}{x-y_k}\right)[x_0,\ldots,x_{2n+1}]\\
&= \frac{\omega_x(y_k)}{\omega_y'(y_k)}\left(q(x)+\frac{\omega_y'(y_k)}{x-y_k}\right)[x_0,\ldots,x_{2n+1}] \\
&= \omega_x(y_k)\left(\frac{1}{x-y_k}\right)[x_0,\ldots,x_{2n+1}] = \omega_x(y_k)\frac{-1}{\omega_x(y_k)}=-1.
\end{align*}

Now, if we take $t_k=y_{2k}, s_k=y_{2k+1},$ for $k=0,\ldots,n$, there exist $D\in\mathcal{D}_{2n+2}$ and $\Gamma\in\mathcal{D}_{n+1}$ such that $\left\vert\Delta\right\vert^{1/2}C\Gamma=DD_1C_{x,y}D_2I_t$, where $D=\mbox{diag}(\sqrt{\left\vert\omega_t(x_j)/\omega_s(x_j)\right\vert})$ and $I_t$ is obtained by removing every second column from $I_{2n+2}$. In particular, $\Gamma=I_t^TD_2I_t$. It follows that
$$\kappa_2(\left\vert\Delta\right\vert^{1/2}C\Gamma)\leq\kappa_2(D)=\max_\ell\sqrt{\left|\frac{\omega_s(x_\ell)}{\omega_t(x_\ell)}\right|}\cdot \max_k\sqrt{\left|\frac{\omega_t(x_k)}{\omega_s(x_k)}\right|}.$$
\end{proof}

Let $\Gamma=I_t^TD_2I_t$ be as in the proof of Theorem~\ref{thm:bernd}. It turns out that for the choice $t_k=x_{2k+1}-\varepsilon, s_k=x_{2k+1}+\varepsilon$, for $k=0,\ldots,n$, as $\varepsilon\rightarrow 0$, the matrix $\left\vert\Delta\right\vert^{1/2}C$ has a finite limit $\widetilde{C}$ of full column rank, and similarly $\Gamma$ tends to some diagonal matrix $\widetilde{\Gamma}$ with positive diagonal entries. From Theorem~\ref{thm:bernd} and its proof we know that $\widetilde{C}\widetilde{\Gamma}$ has condition number 1, and, more precisely, orthonormal columns. We thus obtain an explicit thin QR decomposition of $\widetilde{C}$ (by direct calculation):

	\begin{corollary}\label{cor:bernd} In the limit $t_k\nearrow x_{2k+1}$, for $k=0,\ldots,n$, the matrix $\left\vert\Delta\right\vert^{1/2}C$ converges to $\widetilde{C}$, with entries 
$$
	(\widetilde C )_{j,k} = \left\{\begin{array}{ll}
\frac{|w'_{t}(t_k)|}{\sqrt{|w_x'(t_k)|}}
 & \textnormal{ if } j=2k+1, \\
	0 & \textnormal{ if } j=2\ell+1, \ell\neq k, \\
\frac{|w_{t}(x_j)|}{\sqrt{|w_x'(x_j)|}}
/(x_j-t_k)
 & \textnormal{ if } j=2\ell,
	\end{array}
	\right.
$$
and explicit thin QR decomposition $\widetilde{C}=Q_1R$, where
	\begin{equation*}
	(Q_1)_{j,k} = \left\{\begin{array}{ll}
	1/\sqrt{2} & \textnormal{ if } j=2k+1, \\
	0 & \textnormal{ if } j=2\ell+1, \ell\neq k, \\
\left|\frac{w_{t}(x_j)}{w'_{t}(t_k)}\right|\sqrt{\left|\frac{w_x'(t_k)}{2w_x'(x_j)}\right|}/(x_j-t_k)
& \textnormal{ if } j=2\ell,
	\end{array}
	\right.
	\end{equation*}
and 
$	R = \sqrt{2}\ \textnormal{diag}\left(
\frac{|w'_{t}(t_0)|}{\sqrt{|w_x'(t_0)|}}, \ldots, 
\frac{|w'_{t}(t_n)|}{\sqrt{|w_x'(t_n)|}}\right).$ 

	\end{corollary}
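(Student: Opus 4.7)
}

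The plan is to verify the corollary in two stages: first by computing the entry-wise limit of $|\Delta|^{1/2}C$ as $t_k\nearrow x_{2k+1}$, and then by identifying the explicit QR decomposition of the limit $\widetilde C$, using Theorem~\ref{thm:bernd} (with the symmetric perturbation $t_k=x_{2k+1}-\varepsilon$, $s_k=x_{2k+1}+\varepsilon$) to guarantee the orthonormality of the resulting $Q_1$.

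For the first stage, recall that $(|\Delta|^{1/2}C)_{j,k}=|\omega_t(x_j)|/\bigl(\sqrt{|\omega_x'(x_j)|}\,(x_j-t_k)\bigr)$. I would split into three cases: (i) $j=2\ell$ even: here none of the factors vanish in the limit and the formula for $(\widetilde C)_{2\ell,k}$ falls out directly; (ii) $j=2\ell+1$ with $\ell\neq k$: here $\omega_t(x_j)\to 0$ (because of the factor $x_{2\ell+1}-t_\ell\to 0$) while $x_j-t_k$ stays bounded away from zero, so the entry vanishes; (iii) $j=2k+1$: both numerator and denominator have a simple zero at $t_k=x_{2k+1}$, and cancelling $(x_j-t_k)$ leaves $\prod_{i\neq k}(x_{2k+1}-t_i)/\sqrt{|\omega_x'(x_{2k+1})|}$, which converges to $|\omega_t'(t_k)|/\sqrt{|\omega_x'(t_k)|}$. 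Combining these gives the claimed formula for $\widetilde C$.

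For the second stage, use Theorem~\ref{thm:bernd} with $y_{2k}=t_k=x_{2k+1}-\varepsilon$, $y_{2k+1}=s_k=x_{2k+1}+\varepsilon$. The theorem states that $D D_1 C_{x,y} D_2$ is orthogonal and that $|\Delta|^{1/2}C\Gamma = D D_1 C_{x,y} D_2 I_t$ with $\Gamma=I_t^T D_2 I_t$ and $D=\mathrm{diag}(\sqrt{|\omega_t(x_j)/\omega_s(x_j)|})$. Since selecting the even-indexed columns of an orthonormal matrix still yields orthonormal columns, $|\Delta|^{1/2}C\Gamma$ has orthonormal columns for every $\varepsilon>0$, and hence so does its limit $\widetilde C\widetilde\Gamma$. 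Thus $Q_1:=\widetilde C\widetilde\Gamma$ and $R:=\widetilde\Gamma^{-1}$ furnish the thin QR factorisation (the diagonal matrix $R$ is trivially upper triangular with positive diagonal).

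The technical heart is computing $\widetilde\Gamma$: the even-indexed entries of $D_2$ are $\sqrt{|\omega_x(t_k)/\omega_y'(t_k)|}$, and both $\omega_x(t_k)\sim -\varepsilon\,\omega_x'(x_{2k+1})$ and $\omega_y'(t_k)\sim -2\varepsilon\,(\omega_t'(t_k))^2$ vanish linearly in $\varepsilon$, so their ratio has the finite limit $\omega_x'(t_k)/(2(\omega_t'(t_k))^2)$. This gives $\widetilde\Gamma_{k,k}=\sqrt{|\omega_x'(t_k)|}/(\sqrt{2}\,|\omega_t'(t_k)|)$, i.e.\ $R_{k,k}=\sqrt{2}\,|\omega_t'(t_k)|/\sqrt{|\omega_x'(t_k)|}$, matching the claim. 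A short verification $\widetilde C = Q_1 R$ entry-by-entry (immediate since $R$ is diagonal) completes the argument. The main obstacle I anticipate is bookkeeping in the $\varepsilon\to 0$ limit of $D_2$ and in pairing the $(2k+1,k)$ entries of $\widetilde C$ correctly; everything else reduces to cancellations that are forced by the structure of the Cauchy and Vandermonde-type identities already used in Theorem~\ref{thm:bernd}.
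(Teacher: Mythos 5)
Your approach matches the paper's: take the symmetric perturbation $t_k=x_{2k+1}-\varepsilon$, $s_k=x_{2k+1}+\varepsilon$, send $\varepsilon\to 0$, compute the limits of $|\Delta|^{1/2}C$ and $\Gamma$, and read off the QR factorization from the orthonormality inherited from Theorem~\ref{thm:bernd}. The entry-wise limits in stage one and the asymptotics $\omega_x(t_k)\sim -\varepsilon\,\omega_x'(x_{2k+1})$ and $\omega_y'(t_k)\sim -2\varepsilon\,(\omega_t'(t_k))^2$ in stage two are all correct, and they reproduce the claimed $\widetilde C$, $Q_1$, and $R$.

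There is one genuine slip in the logic. You write that Theorem~\ref{thm:bernd} ``states that $D D_1 C_{x,y} D_2$ is orthogonal,'' and from this you conclude that $|\Delta|^{1/2}C\Gamma = DD_1C_{x,y}D_2 I_t$ has orthonormal columns for \emph{every} $\varepsilon>0$. That is not what the theorem's proof gives: only $D_1 C_{x,y} D_2$ is orthogonal, and $D=\mathrm{diag}\bigl(\sqrt{|\omega_t(x_j)/\omega_s(x_j)|}\bigr)$ is a further left diagonal scaling which is generally \emph{not} the identity when $\varepsilon>0$. So for finite $\varepsilon$, the columns of $|\Delta|^{1/2}C\Gamma$ are not orthonormal; all the theorem gives is $\kappa_2\bigl(|\Delta|^{1/2}C\Gamma\bigr)\le\kappa_2(D)$. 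The conclusion you want is salvaged by the extra observation that $D\to I$ as $\varepsilon\to 0$: since $t_k$ and $s_k$ both collapse onto $x_{2k+1}$, the ratios $\omega_t(x_j)/\omega_s(x_j)$ tend to $\pm 1$ in every component, so $\kappa_2(D)\to 1$ and the limit $\widetilde C\widetilde\Gamma$ does have orthonormal columns. You should make this step about $D$ explicit; as written, the intermediate claim is false even though the final assertion about $\widetilde C\widetilde\Gamma$ is correct, and without identifying $D\to I$ the argument does not close.

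One small cosmetic point: for the entries with $j=2k+1$, it is worth noting that $x_{2k+1}-t_k=\varepsilon>0$ before cancelling, so that $|\omega_t(x_{2k+1})|/(x_{2k+1}-t_k)$ really does converge to $|\omega_t'(t_k)|$ with the absolute value in the right place; you gloss over the sign handling here, and the corollary's formulas are stated with absolute values precisely because of it.
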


Corollary~\ref{cor:bernd} suggests the choice
	\begin{equation}\label{eq:supportchoose}
		t_k=x_{2k+1} \textnormal{ for }k=0,\ldots,n.
	\end{equation}
This takes us back to the interpolatory mode of barycentric representations~\eqref{eq:rzbary},  in which we take $\alpha_k=\beta_k(f(t_k)-\lambda)$ for all $k$, instead of solving the system~\eqref{eq:retalpha}. This interpolatory mode formulation is used in~\cite[Sec.~3.2.3]{ionita2013lagrange}. Our derivation provides a theoretical justification by showing that it is optimal with respect to the conditioning of $\left\vert\Delta\right\vert^{1/2}C\Gamma$. 
Moreover, since $\min_{\Gamma\in\mathcal{D}_{n+1}}\kappa_2(\widetilde C\Gamma)=1$ in~\eqref{eq:mincd}, 
forming the QR factorization of $\left\vert\Delta\right\vert^{1/2}C$ via a standard algorithm (e.g.~Householder QR) to obtain $Q_1$ is actually unnecessary, as the explicit form of $Q_1$ is given in Corollary~\ref{cor:bernd}. 
In addition, we reduce the problem to a symmetric eigenvalue problem~\eqref{eq:retbeta}, resulting in well-conditioned eigenvalues, with $\beta$ being obtained by solving the diagonal system $R\beta=y$ with $y$ as in~\eqref{eq:retbeta}. Compared to~\eqref{eq:altern}, where we want $q$ to have the same sign over $\{x_\ell\}$, we similarly require that $\beta$ and thus $y$ have components alternating in sign, which uniquely fixes the norm 1 eigenvector $y$ in~\eqref{eq:retbeta}. Our approach also allows for nondiagonal types, as we describe next. 

\subsection{The nondiagonal case $\boldmath{m\neq n}$}\label{sec:nondiag}
As pointed out in Section~\ref{subsec:nondiag}, when searching for a best approximant with $m\neq n$, we need to force the coefficient vector $\alpha$ or $\beta$ to lie in a certain subspace. 
This results in modified versions of~\eqref{eq:geneigcauchy}. Namely,
\begin{equation}\label{eq:geneigcauchynondiagm>n}
\begin{bmatrix}C & -FCP_n\end{bmatrix}\begin{bmatrix}
\alpha \\
\widehat\beta
\end{bmatrix}=\lambda\begin{bmatrix}
0 & -SCP_{n}
\end{bmatrix}\begin{bmatrix}
\alpha \\
\widehat\beta
\end{bmatrix},\qquad \mbox{when\ } m>n, 
\end{equation}
for $\widehat\beta\in\mathbb{C}^{n+1}$,
and we take $\beta=P_n\widehat\beta$. 
Similarly, 
\begin{equation}\label{eq:geneigcauchynondiagm<n}
\begin{bmatrix}CP_m & -FC\end{bmatrix}\begin{bmatrix}
\widehat\alpha \\
\beta
\end{bmatrix}=\lambda\begin{bmatrix}
0 & -SC
\end{bmatrix}\begin{bmatrix}
\widehat\alpha \\
\beta
\end{bmatrix},\qquad \mbox{when\ } m<n,
\end{equation}
for $\widehat\alpha\in\mathbb{C}^{m+1}$,
and we take $\alpha=P_m\widehat\alpha$. 

Below we describe the reduction of the generalized eigenvalue problems~\eqref{eq:geneigcauchynondiagm>n} and \eqref{eq:geneigcauchynondiagm<n} to standard symmetric eigenvalue problems. 


\paragraph{Case $m>n$} In this case, $C\in\mathbb{R}^{(m+n+2)\times(m+1)}$. 
Since $\det|\Delta|^{1/2}\neq 0$,
\eqref{eq:geneigcauchynondiagm>n} is equivalent to the generalized eigenvalue problem
\begin{equation}
\label{eq:GEPm>n}
\begin{bmatrix}\left\vert\Delta\right\vert^{1/2}C & -F\left\vert\Delta\right\vert^{1/2}CP_n\end{bmatrix}\begin{bmatrix}
\alpha \\
\widehat\beta
\end{bmatrix}=\lambda\begin{bmatrix}
0 & -S\left\vert\Delta\right\vert^{1/2}CP_n
\end{bmatrix}\begin{bmatrix}
\alpha \\
\widehat\beta
\end{bmatrix}.  
\end{equation}
Consider the (thin) QR decomposition of 
$\left\vert\Delta\right\vert^{1/2}C
\begin{bmatrix}
P_n& P_n^\perp  
\end{bmatrix}=(S\Delta)^{1/2}C
\begin{bmatrix}
P_n&P_n^\perp  
\end{bmatrix}$:
\[
\left|\Delta\right|^{1/2}C
\begin{bmatrix}
P_n& P_n^\perp  
\end{bmatrix}
=\begin{bmatrix} Q_1 & Q_2
\end{bmatrix}R=
\begin{bmatrix} Q_1 & Q_2
\end{bmatrix}
\begin{bmatrix}
R_1& R_{12}  \\ 0& R_2
\end{bmatrix}.
\]
Then we have the identity $
\begin{bmatrix}
Q_1& Q_2  
\end{bmatrix}^T(SQ_1)=0$, 
as can be verified analogously to~\eqref{eq:vandEq} using divided differences. 
This implies $(SQ_1)^T\left\vert\Delta\right\vert^{1/2}C=0$, 
so by left-multiplying \eqref{eq:GEPm>n} by $
\begin{bmatrix}
(SQ_1)^\perp& SQ_1  
\end{bmatrix}^T$ we obtain a block upper-triangular eigenvalue problem with lower-right $(n+1)\times(n+1)$ block 
\begin{equation*}
(SQ_1)^TFQ_1R_{1}\widehat\beta = \lambda (SQ_1)^TSQ_1R_1\widehat\beta, 
\end{equation*}
which again reduces to the standard symmetric eigenvalue problem 
(setting $y=R_1\widehat\beta$)
\begin{equation}\label{eq:retbetamgn}
Q_1^T(SF)Q_1y=\lambda y.
\end{equation} 

From \eqref{eq:GEPm>n}, we have
$
\left\vert\Delta\right\vert^{1/2}C\alpha=(F-\lambda S)\left\vert\Delta\right\vert^{1/2}CP_n\widehat\beta$. 
Left-multiplying by
$
\begin{bmatrix}
Q_1&Q_2  
\end{bmatrix}^T$ and using 
$
\begin{bmatrix}
Q_1&Q_2  
\end{bmatrix}^TS\left\vert\Delta\right\vert^{1/2}CP_n=0$, we obtain
\begin{equation*}
\begin{split}
R
\begin{bmatrix}
P_n&P_n^\perp  
\end{bmatrix}^T\alpha=&
\begin{bmatrix}
Q_1&Q_2  
\end{bmatrix}^TF\left\vert\Delta\right\vert^{1/2}CP_n\widehat\beta
=
\begin{bmatrix}
Q_1&Q_2  
\end{bmatrix}^TFQ_1R_1\widehat\beta \\ =&
\begin{bmatrix}
Q_1&Q_2  
\end{bmatrix}^TFQ_1y.
\end{split}
\end{equation*}
Therefore 
\begin{equation*}
\alpha=
\begin{bmatrix}
P_n&P_n^\perp  
\end{bmatrix}
R^{-1}
\begin{bmatrix}
Q_1&Q_2  
\end{bmatrix}^TFQ_1y,
\end{equation*}
which is obtained by computing the vector
$\widehat y=
\begin{bmatrix}
Q_1&Q_2  
\end{bmatrix}^TFQ_1y$, then solving $R\widetilde y = \widehat y$ for $\widetilde y$, 
then $\alpha =  
\begin{bmatrix}
P_n&P_n^\perp  
\end{bmatrix}
\widetilde y$. 

\paragraph{Case $m<n$}
This case is analogous to the previous one; we highlight the differences.  
$C$ is a $(m+n+2)\times (n+1)$ matrix. 
Equation~\eqref{eq:geneigcauchynondiagm<n} is equivalent to 
\begin{equation}
\label{eq:GEPm<n}
\begin{bmatrix}\left\vert\Delta\right\vert^{1/2}CP_m & -F\left\vert\Delta\right\vert^{1/2}C\end{bmatrix}\begin{bmatrix}
\widehat\alpha \\
\beta
\end{bmatrix}=\lambda\begin{bmatrix}
0 & -S\left\vert\Delta\right\vert^{1/2}C
\end{bmatrix}\begin{bmatrix}
\widehat\alpha \\
\beta
\end{bmatrix}.  
\end{equation}
Consider the (thin) QR decompositions 
\[
\left|\Delta\right|^{1/2}C=(S\Delta)^{1/2}C
=Q_1R,\qquad 
\left|\Delta\right|^{1/2}CP_m
=\widehat Q_1\widehat R.
\] 
Here $Q_1\in\mathbb{R}^{(m+n+2)\times(n+1)},\widehat Q_1\in\mathbb{R}^{(m+n+2)\times(m+1)}$.
We have $\widehat Q_1^T(SQ_1)=0$, which again can be established using divided differences.
This implies $(SQ_1)^T\left\vert\Delta\right\vert^{1/2}CP_m=0$, so 
left-multiplying equation \eqref{eq:GEPm<n} by $
\begin{bmatrix}
(SQ_1)^\perp& SQ_1  
\end{bmatrix}^T$ 
results in 
a block upper-triangular eigenvalue problem with lower-right block 
\[
(SQ_1)^TFQ_1R\beta = \lambda (SQ_1)^TSQ_1R\beta, 
\]
which also reduces to the standard symmetric eigenvalue problem 
(setting $y=R\beta$)
\begin{equation}\label{eq:retbetammn}
Q_1^T(SF)Q_1y=\lambda y.
\end{equation} 

From~\eqref{eq:GEPm<n}, we have
$\left\vert\Delta\right\vert^{1/2}CP_m\widehat\alpha=(F-\lambda S)\left\vert\Delta\right\vert^{1/2}C\beta$. 
Left-multiplying by $\widehat Q_1^T$ and using 
$
\widehat Q_1^TS\left\vert\Delta\right\vert^{1/2}C=0$, we obtain
\begin{equation*}
\begin{split}
\widehat R
\widehat \alpha= &
\widehat Q_1^T
F\left\vert\Delta\right\vert^{1/2}C\beta
= 
\widehat Q_1^T
F Q_1 R\beta
=\widehat Q_1^T
F Q_1 y.
\end{split}
\end{equation*}
Therefore 
\begin{equation*}
\widehat\alpha=
\widehat R^{-1}
\widehat Q_1^TFQ_1y,
\end{equation*}
obtained via 
$\widehat y=
\widehat Q_1^TFQ_1y$, then solving the linear system $\widehat R\widehat\alpha = \widehat y$. 

Analogously to our comments at the end of Section~\ref{sec:bary}, the analysis for nondiagonal approximation presented here carries over to the weighted setting. In both the $m>n$ and $m<n$ scenarios, the standard symmetric eigenproblems~\eqref{eq:retbetamgn} and~\eqref{eq:retbetammn} become
\[
	Q_1^T(SF)Q_1y=\lambda Q_1^TW^{-1}Q_1y,
\]
where $y=R_1\widehat\beta$ when $m>n$ and $y=R\beta$ when $m<n$. Recovering the set of barycentric coefficients in the numerator corresponds to solving the systems
\[
	\alpha = \begin{bmatrix}
	P_n&P_n^\perp  
	\end{bmatrix}
	R^{-1}
	\begin{bmatrix}
	Q_1&Q_2  
	\end{bmatrix}^T(F-\lambda SW^{-1})Q_1y, \qquad m > n
\]
and
\[
\widehat\alpha=
\widehat R^{-1}
\widehat Q_1^T(F-\lambda SW^{-1})Q_1y, \qquad m < n.
\]

\paragraph{Stability and conditioning}
We have just shown that the matrices arising in our rational Remez algorithm have explicit expressions, and the eigenvalue problem reduces to a standard symmetric problem. 
Indeed, our experiments corroborate that we have greatly improved the stability and conditioning of the rational Remez algorithm using the barycentric representation. 
However, the algorithm is still not guaranteed to compute $r^*$ to machine precision. 
Let us summarize the situation for the unweighted case. 
As shown in Corollary~\ref{cor:bernd}, the computation of $Q_1$ can be done explicitly, and the linear system $y=R\beta$ is diagonal, hence can be solved with high relative accuracy. The main source of numerical errors is therefore in the symmetric eigenvalue problem~\eqref{eq:retbeta}, \eqref{eq:retbetamgn} or~\eqref{eq:retbetammn}. 
As is well known, by Weyl's bound~\cite[Cor.~IV.4.9]{stewart-sun:1990}, eigenvalues of symmetric matrices are well conditioned with condition number 1; thus $\lambda$ is computed with $O(u)$ accuracy, assuming for simplicity that $\|f\|_\infty=1$ (without loss of generality). The eigenvector, on the other hand, has conditioning $O(1/\mbox{gap})$~\cite[Ch.~V]{stewart-sun:1990}, where $\mbox{gap}$ is the distance between the desired $\lambda$ and the rest of the eigenvalues. These eigenvalues are equal to those of the nonzero eigenvalues of the generalized eigenproblem~\eqref{eq:linaltern2}, and are inherent in the Remez algorithm, i.e., they cannot be changed e.g. by  a change of bases. For a fixed $f$, $\mbox{gap}$ tends to decrease as $m,n$ increase, and we typically have $\mbox{gap}=O(|\lambda|)$. Hence the computed eigenvector tends to have accuracy $O(u/|\lambda|)$, and if the eigenvector $y$ has small elements, the componentwise relative accuracy may be worse. 
The computation therefore breaks down (perhaps as expected) when $|\lambda|=O(u)$, that is, when the error curve has amplitude of size machine precision. 

\subsection{Adaptive choice of the support points}\label{sec:supportpts}
Theorem~\ref{thm:bernd} gives an optimal choice of support points $t_k=x_{2k+1}$ in terms of optimizing $\min_{\Gamma\in\mathcal{D}_{n+1}}\kappa_2(\left\vert\Delta\right\vert^{1/2}C\Gamma)$.  In Section~\ref{sec:whybary} we discussed another desideratum for the support points $\{t_k\}$: the resulting $|D(x_\ell)|=|q(x_\ell)\prod_{k=0}^n(x_\ell-t_k)|$ should take uniformly large values for all $\ell$. Fortunately, this requirement is also met with this choice, as was illustrated in Figure~\ref{fig:plotq}.

When $m\neq n$,~\eqref{eq:supportchoose} does not determine enough support points. We take the remaining $|m-n|$ support points from the rest of the reference points in Leja style, i.e.,~to maximize the product of the differences (see for instance~\cite[p.~334]{reichel1990newton}). This is a heuristic strategy, and the optimal choice is a subject of future work: indeed, in this case $\min_{\Gamma\in\mathcal{D}_{n+1}}\kappa_2(\left\vert\Delta\right\vert^{1/2}CP_{m,n}\Gamma)>1$.


\section{Initialization}\label{sec:initialization}

An indispensable component of a successful Remez algorithm implementation is a method for finding a good set of initial reference points $\{x_\ell \}$. 
A key element of our approach is the AAA-Lawson algorithm, which can efficiently find an approximate solution to the minimax problem~\eqref{eq:mainproblem} (to low accuracy).

\subsection{Carath\'{e}odory-Fej\'{e}r (CF) approximation}\label{sec:inicf}
We first attempt to compute the CF approximant~\cite{trefethen1983caratheodory,van2011robust} to $f$, and use it to find the initial reference points (as explained in Section~\ref{sec:nextrefs}). 
The dominant computation is an SVD of a Hankel matrix of Chebyshev coefficients, which usually does not cause a computational bottleneck.
This method was also used in the previous Chebfun~\texttt{remez} code. When $f$ is smooth, the result produced by CF approximation is often indistinguishable from the best approximation, but nonsmooth cases may be very different.

\subsection{AAA-Lawson approximation}\label{subsubsec:AAAlaw}
This approach is based on the AAA algorithm~\cite{aaapreprint} followed by 
an adaptation of the Lawson algorithm. 
The resulting algorithm is also based crucially on the barycentric representation.
To keep the focus on Remez, we defer the details to Section~\ref{sec:iniAAA}.

The output of the AAA-Lawson iteration typically has a nearly equioscillatory error curve $e=f-r$, from which we find the initial set of reference points as the extrema of $e$. For the prototypical example $f=|x|$, AAA-Lawson initialization lets our barycentric~\texttt{minimax} code converge for type up to $(40,40)$. 
The entire process relies on a moderate number of SVDs (say $\max(m,n)+10$). 

\begin{figure}[h!]
	\centering

	\begin{minipage}[t]{0.44\hsize}
		\centering
		\includegraphics[width=0.95\textwidth]{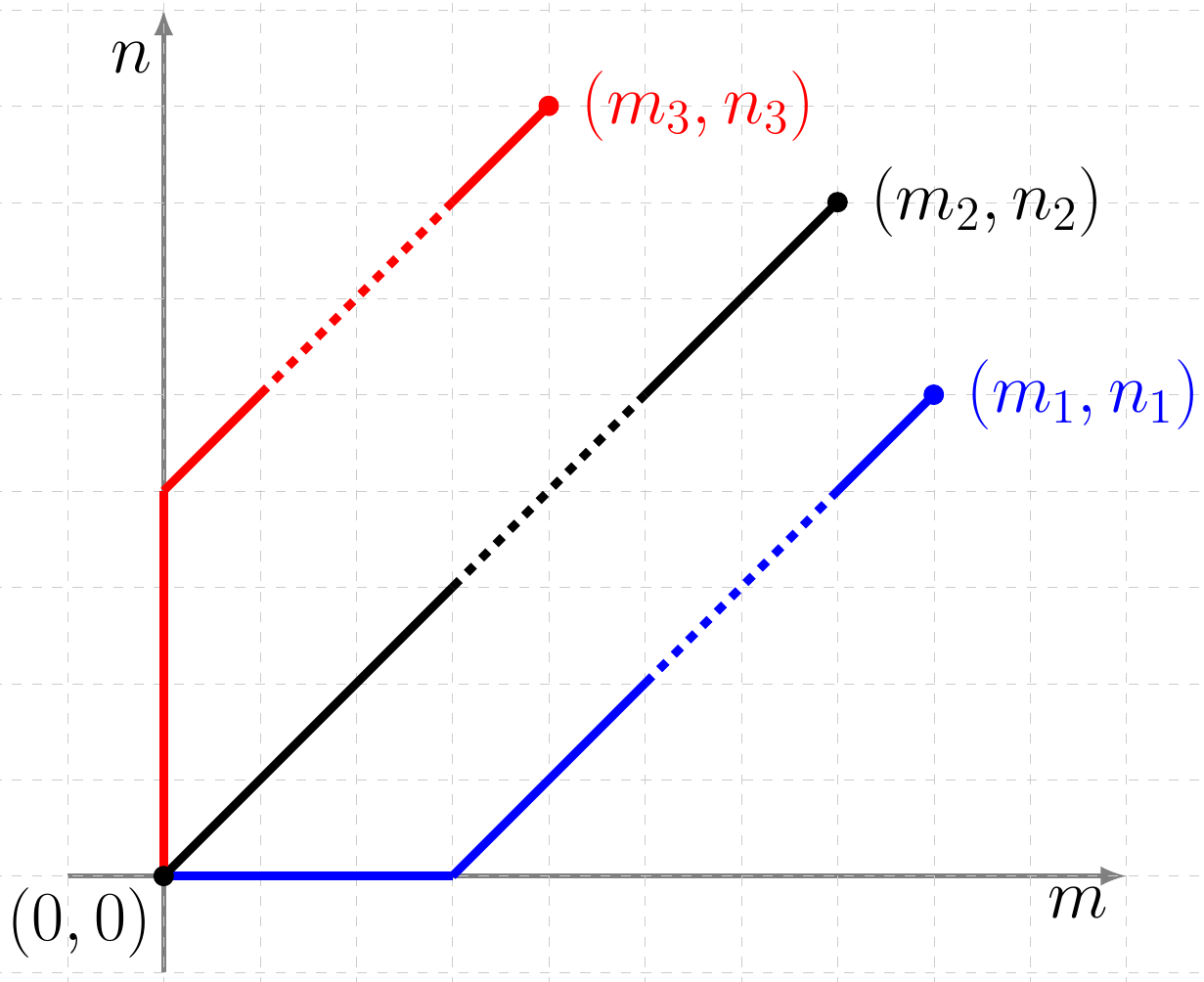}
	\end{minipage} 
	\begin{minipage}[t]{0.53\hsize}
		
		\includegraphics[width=1.0\textwidth]{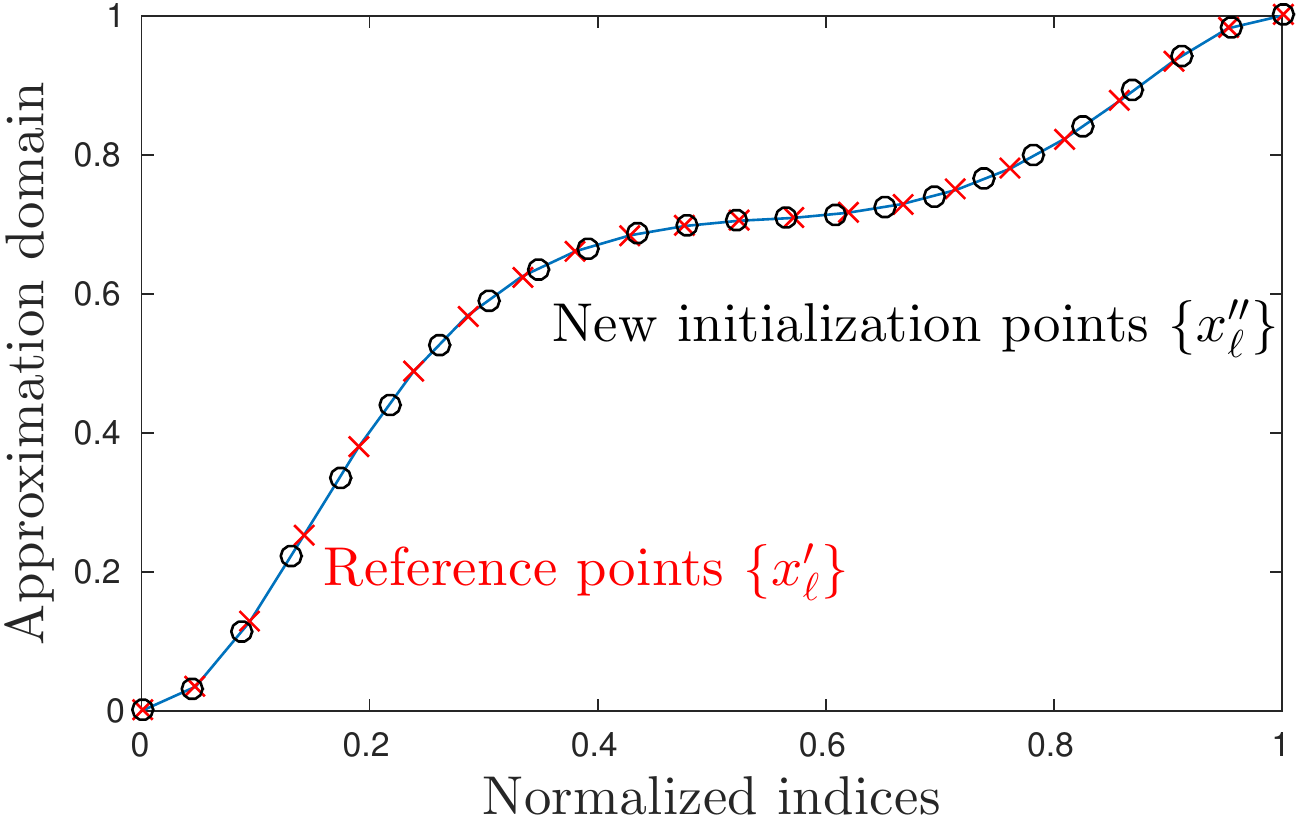}
	\end{minipage}
	
	\caption{Initialization with lower degree approximations. The left plot shows the three possible paths for updating the degrees (assuming the increment is $j=1$): $m<n$ (red), $m=n$ (black) and $m>n$ (blue). The right plot shows how initialization is done at an intermediate step. The function is $f_1$ from Table~\ref{table:remezother}, with a singularity at $x=1/\sqrt{2}$. The $y$ components of the red crosses correspond to the final references $\{x_\ell'\}$ for the $(m',n')=(10,10)$ best approximation, while the $y$ components of the black circles are the initial guess $\{x_{\ell}''\}$ for the $(m'',n'')=(11,11)$ problem, taken based on the piecewise linear fit at $\{x_\ell'\}$. Note how the $y$ components of both sets of points cluster near the singularity.}
	\label{fig:lowerdegree}
\end{figure}

\subsection{Using lower degree approximations}\label{sec:inicdf} We resort to this strategy if CF and AAA-Lawson fail to produce a sufficiently good initial guess. For functions $f$ with singularities in $[a,b]$, the reference sets $\{x_\ell\}$ corresponding to best approximations in~\eqref{eq:alternori} tend to cluster near these singularities as $m$ and $n$ increase.


It is sensible to expect that first computing a type $(m',n')$ best approximation to $f$ with $m'\ll m$ and $n'\ll n$ is easier (with convergence achieved if necessary with the help of CF or AAA-Lawson). We then proceed by progressively increasing the values of $m'$ and $n'$ by small increments $j$, typically $j\in\left\lbrace 1,2,4\right\rbrace$. The steps taken follow a diagonal path, as explained in Figure~\ref{fig:lowerdegree}.
Note that in addition to improving the robustness of the Remez algorithm, this strategy can help detect degeneracy; recall the discussion after~\eqref{eq:alternori}. It proves useful for many examples, including some of those shown in Section~\ref{sec:numres}: type $(n,n)$ approximations to $f(x)=|x|, x\in[-1,1]$ for $n>40$ and the $f_1,f_2$ and $f_4$ specifications in Table~\ref{table:remezother}.
 


\section{Searching for the new reference}\label{sec:nextrefs}
\begin{figure}[htbp]
	\centering
	\includegraphics[width=0.8\textwidth]{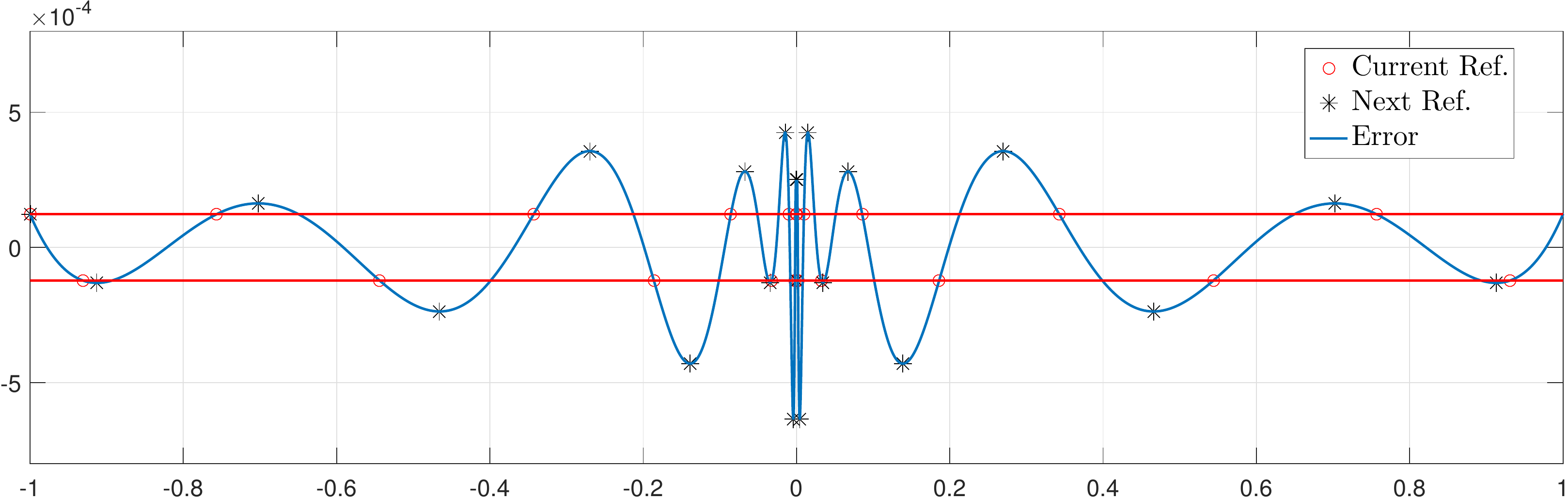}  
	\caption{
		Illustration of how a new set of reference points (black stars) is found from the current error function $e=f-r$ (blue curve). 
		Shown here is the error curve after three Remez iterations in finding the best type $(10,10)$ approximation to $f(x)=|x|$ on $[-1,1]$. We split this interval into subintervals separated by the previous reference points (red circles), and approximate $e$ on each subinterval by a low-degree polynomial. We then find the roots of its derivative. }
	\label{fig:plotfindref}
	
\end{figure}
We now turn to the updating strategy for the reference points $x_0\ldots,x_{m+n+1}$ during the Remez iterations. These are a subset of the local extrema of the error function $e(x)=f(x)-r(x)$. To find them, we 
decompose the domain $[a,b]$ into subintervals of the form $[\tilde x_\ell,\tilde x_{\ell+1}]$ (and $[a,\tilde x_0]$ and $[\tilde x_{m+n+1},b]$, if non-degenerate; here $\{\tilde x_\ell\}$ are the old reference points) and then compute Chebyshev interpolants $p_e(x)$ of $e(x)$ on each subinterval. 
In addition, if $f$ has singularities (identified by Chebfun's {\tt splitting on} functionality~\cite{pachon2010piecewise}), then we further divide the subintervals at those points.
Since $e(x)$ is then smooth and each subinterval is small, typically a low degree suffices for $p_e=\sum_{i=0}^k c_iT_i(x)$: we start with $2^3+1$ points (degree $k =8$), and resample if necessary (determined by examining the decay of the Chebyshev coefficients).
We then find the roots of $p_e'(x) = \sum_{i=1}^k ic_iU_{i-1}(x)$ 
(using the formula $T_n'(x)=nU_{n-1}(x)$) via the eigenvalues of the colleague matrix for Chebyshev polynomials of the second kind~\cite{good1961colleague}. 
Typically, one local extremum per subinterval is found, resulting in $m+n+2$ points, including the endpoints. If more extrema are found, we evaluate the values of $|e(x)|$ at those points and select those with the largest values that satisfy~\eqref{eq:newaltern}.

\section{Numerical results}\label{sec:numres}
All computations in this section were done using Chebfun's new~\texttt{minimax} command in standard IEEE double precision arithmetic.

Let us start with our core example of approximating $|x|$ on $[-1,1]$, a problem discussed in detail in~\cite[Ch.~25]{trefethen2013approximation}. For more than a century, this problem has attracted interest. The work of Bernstein and others in the 1910s led to the theorem that degree $n\geq 0$ polynomial approximations of this function can achieve at most $O(n^{-1})$ accuracy, whereas Newman in 1964 showed that rational approximations can achieve root-exponential accuracy~\cite{newman1964}. The convergence rate for best type $(n,n)$ approximations was later shown by Stahl~\cite{Stahl93} to be $E_{n,n}(|x|,[-1,1])\sim 8e^{-\pi\sqrt{n}}$.

\begin{figure}
	\centering
	\includegraphics[width=0.8\linewidth]{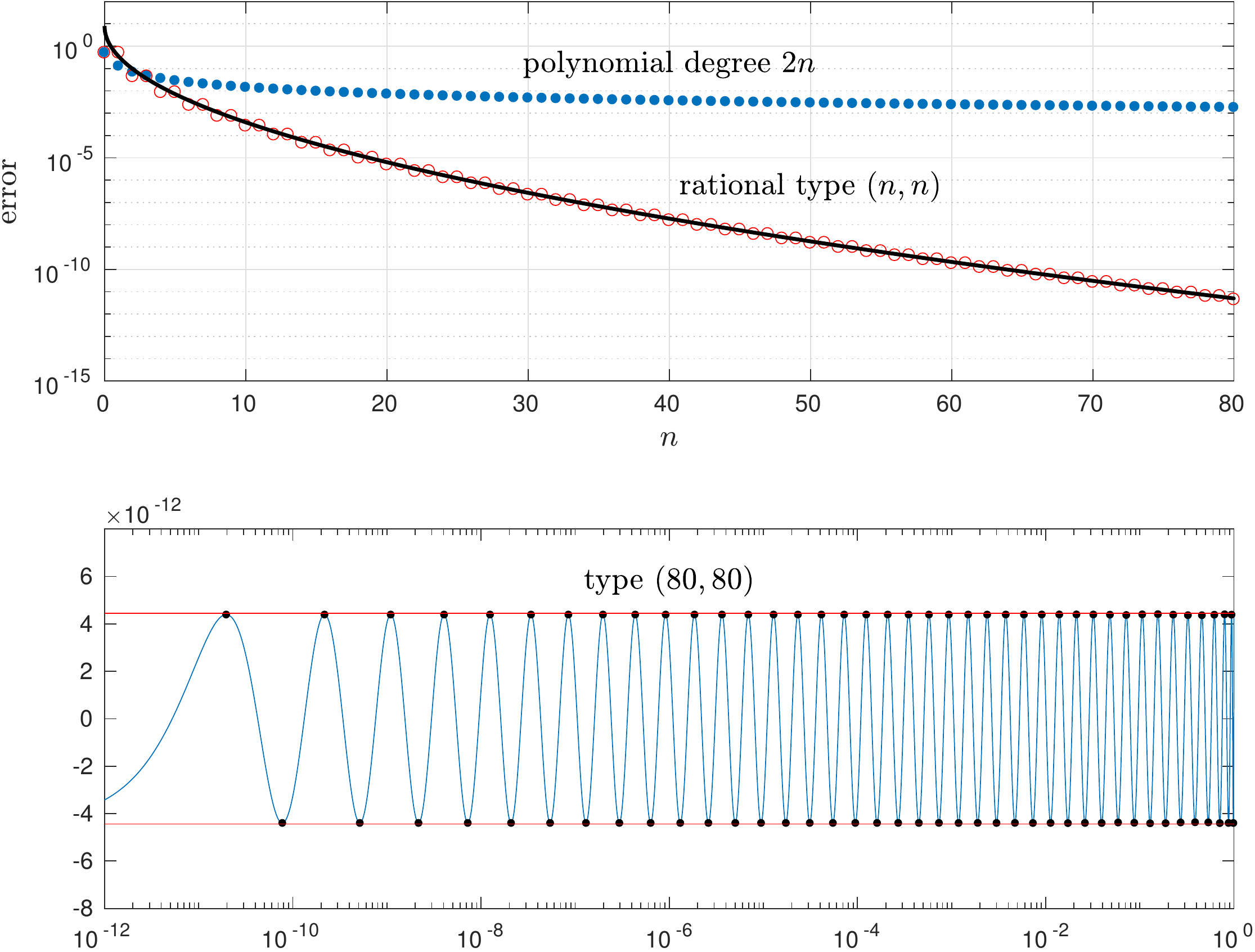}
	\caption{In the first plot, the upper dots show the best approximation errors for the degree $2n$ best polynomial approximations of $|x|$ on $[-1,1]$, while the lower ones correspond to the best type $(n,n)$ rational approximations, superimposed on the asymptotic formula from~\cite{Stahl93}. The bottom plot shows the minimax error curve for the type $(80,80)$ best approximation to $|x|$. 
Note that the horizontal axis has a log scale: the alternant ranges over 11 orders of magnitude. The positive part of the domain $[-1,1]$ is shown (by symmetry the other half is essentially the same).}
	\label{fig:absErrors}
\end{figure}

This result had in fact been conjectured by Varga, Ruttan and Carpenter~\cite{VargaEtAl93} based on a specialized multiple precision (200 decimal digits) implementation of the Remez algorithm. Their computations were performed on the square root function, using the fact that $E_{2n,2n}(|x|,[-1,1])=E_{n,n}(\sqrt{x},[0,1])$, as follows from symmetry. They went up to $n=40$. In both settings,  the equioscillation points cluster exponentially around $x=0$ (see second plot of Figure~\ref{fig:absErrors}), making it extremely difficult to compute best approximations. Our barycentric Remez algorithm in double precision arithmetic is able to match their performance, in the sense that we obtain the type $(80,80)$ best approximation to $|x|$ in less than 15 seconds on a desktop machine. The results are showcased in Figure~\ref{fig:absErrors}, where our levelled error computation for the type $(80,80)$ approximation (value $4.39\ldots \times 10^{-12}$) matches the corresponding error of~\cite[Table 1]{VargaEtAl93} to two significant digits, even though the floating point precision is no better than $10^{-16}$. 

Running the other non-barycentric codes (Maple's \texttt{numapprox[minimax]}, Mathematica's \texttt{MiniMaxApproximation} (which requires $f$ to be analytic on $[a,b]$), and Chebfun's previous {\tt remez}) on the same example resulted in failures at very small values of $n$ (all for $n\leq 8$). 

\begin{table}[h!]
	\small
	\centering
	\caption{
Best approximation to five difficult functions by the barycentric rational Remez algorithm. $f_1''$ is discontinuous at $x=1/\sqrt{2}$, $f_2'$ is discontinuous at $x=0$, $f_3'$ is unbounded as $x\to 0$, $f_4$ has two sharp peaks at $x=\pm 0.6$, and $f_5$ has a logarithmic singularity at $x=0$.\small}
	\label{table:remezother}
	\begin{tabular}{lcccc}
		\hline 
		$i$ & $f_i$ & $[a,b]$ & $(m,n)$ & $\left\Vert f-r^*\right\Vert_{\infty}$ \T\B \\  \hline 
\\
&&&& \vspace{-6mm}\\ \vspace{2mm} 
		1 & $\begin{cases}
		x^2, & x<\frac{1}{\sqrt{2}} \\
		-x^2+2\sqrt{2}x-1, & \frac{1}{\sqrt{2}}\leq x 
		\end{cases}$ & $[0,1]$ & $(22,22)$        & $2.439\times 10^{-9}$                                       \\ \vspace{2mm}

		2 & $|x|\sqrt{|x|}$ & $[-0.7,2]$ & $(17,71)$         & $4.371\times 10^{-8}$  \\  
\vspace{2mm}
		3 & $x^3+\dfrac{\sqrt[3]{x}e^{-x^2}}{8}$       & $[-0.2,0.5]$ & $(45,23)$ & $2.505\times 10^{-5}$                                       \\  \medskip
		4 & $\dfrac{100\pi(x^2-0.36)}{\sinh(100\pi(x^2-0.36))}$       & $[-1,1]$        & $(38,38)$        &  $1.780\times 10^{-12}$                                      \\  \vspace{2mm}
		5 & $-\dfrac{1}{\log|x|}$  & $[-0.1,0.1]$ & $(8,8)$ & $1.52\times 10^{-2}$                                   \\ \hline
	\end{tabular}
\end{table}
\normalsize 

\begin{figure}[htbp]
	\centering
	\includegraphics[width=1.0\linewidth]{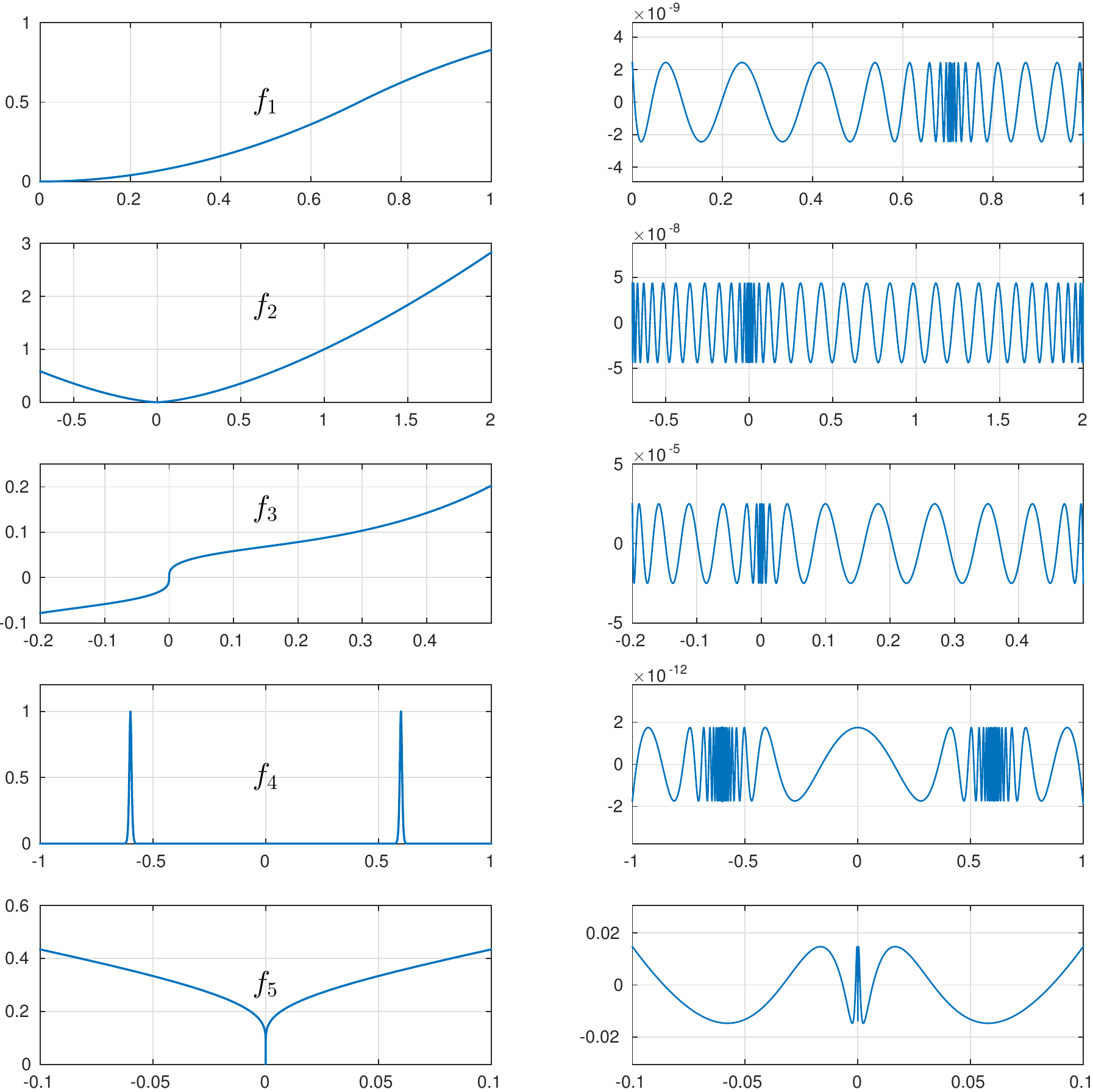}
	\caption{Error curves for the best rational approximations 
of  Table~\ref{table:remezother}.}
	\label{fig:remezErrors}
\end{figure}

The robustness of our algorithm is also illustrated by the examples of Table~\ref{table:remezother} and Figure~\ref{fig:remezErrors}, which is a highlight of the paper. Computing these five approximations takes in total less than 50 seconds with~\texttt{minimax}. Example $f_4$ is taken from~\cite[\S 5]{van2010computing}, while $f_5$ is inspired by~\cite{pushnitski2017best}. The difficulty of approximating $f_5$ is even more pronounced than for $|x|$, since best type $(n,n)$ approximations to $f_5$ offer at most $O(n^{-1})$ accuracy (a stark contrast to the root-exponential behavior of $E_{n,n}(|x|,[-1,1])$) and the reference points cluster even more strongly, quickly falling below machine precision.

In Figures~\ref{fig:sqrtrelerr} and~\ref{fig:zolotarev}, we further illustrate \texttt{minimax} and its weighted variant, by revisiting some classical problems in rational approximation: the Zolotarev problems~\cite[Ch.~9]{elementselliptic}. 
Among other questions, 
Zolotarev asked what are the best rational approximants to the sign function (on the union of intervals $[-b,-a]\cup [a,b]$ for scalars $0<a<b$) and the 
$\sqrt{x}$ function (in the relative sense, i.e.,~minimizing $\|1-r/\sqrt{x}\|_{\infty}$) on $[1/b^2,1/a^2]$. 
 Zolotarev proved these problems are mathematically equivalent
through the identity $\mbox{sign}(x) = x\sqrt{1/x^2}$: 
if $r$ is the type $(m,m)$ best approximant to $\sqrt{x}$ on $[1/b^2,1/a^2]$, then
$\mbox{sign}(x)-xr(1/x^2)$ is found to equioscillate at $4m+4$ points on 
$[-b,-a]\cup [a,b]$, so $xr(1/x^2)$ is the best approximant to $\mbox{sign}(x)$ of type $(2m+1,2m)$ on $[-b,-a]\cup [a,b]$. 
Furthermore, Zolotarev gave explicit solutions involving Jacobi's elliptic functions. These rational functions have the remarkable property of preserving optimality under appropriate composition~\cite{nakfrsirev}. 
In Figure~\ref{fig:sqrtrelerr} we compute the best relative error approximant of type $(m,m)$ to $\sqrt{x}$ 
using the weighted variant of our rational Remez algorithm. We then compute  $xr(1/x^2)$, the type $(2m+1,2m)$ best approximant to the sign function. The error function is shown in Figure~\ref{fig:zolotarev}, confirming Zolotarev's results.

We emphasize that the  examples presented in this
section are extraordinarily challenging, far beyond the capabilities
of most codes for minimax approximation.  Chebfun~\texttt{minimax} not only
solves them but does so quickly.
For smoother functions such as analytic functions (with singularities, if any, lying far from the interval), we find that \texttt{minimax} usually easily computes $r^*$ so long as $\|f-r^*\|_\infty$ is a digit or two larger than $u\|f\|_\infty$.

\begin{figure}[htbp]
	\centering
	\includegraphics[width=\linewidth]{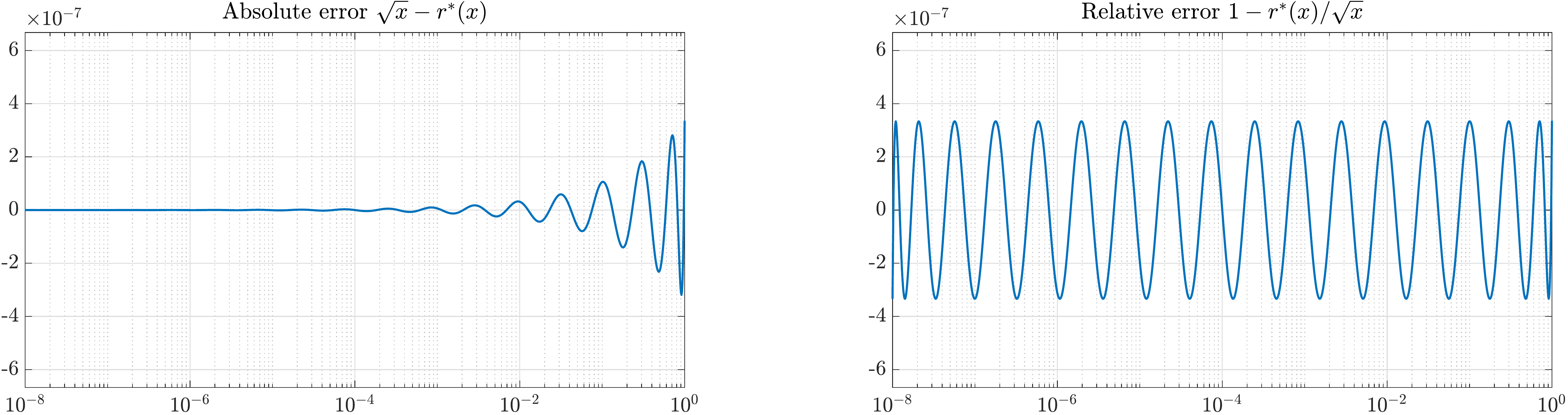}
	\caption{
Result of the weighted version of our barycentric Remez algorithm for the function 
$f(x)=\sqrt{x},x\in[10^{-8},1]$ 
with $w(x)=1/\sqrt{x}$ and a type 
$(17,17)$
rational approximation. We plot the absolute error curve on the left, while the relative error (right), matching our choice of $w$, gives an expected equioscillating curve. This is Zolotarev's third problem.
}\label{fig:sqrtrelerr}
\end{figure}

\begin{figure}[htbp]
	\centering
	\includegraphics[width=\linewidth]{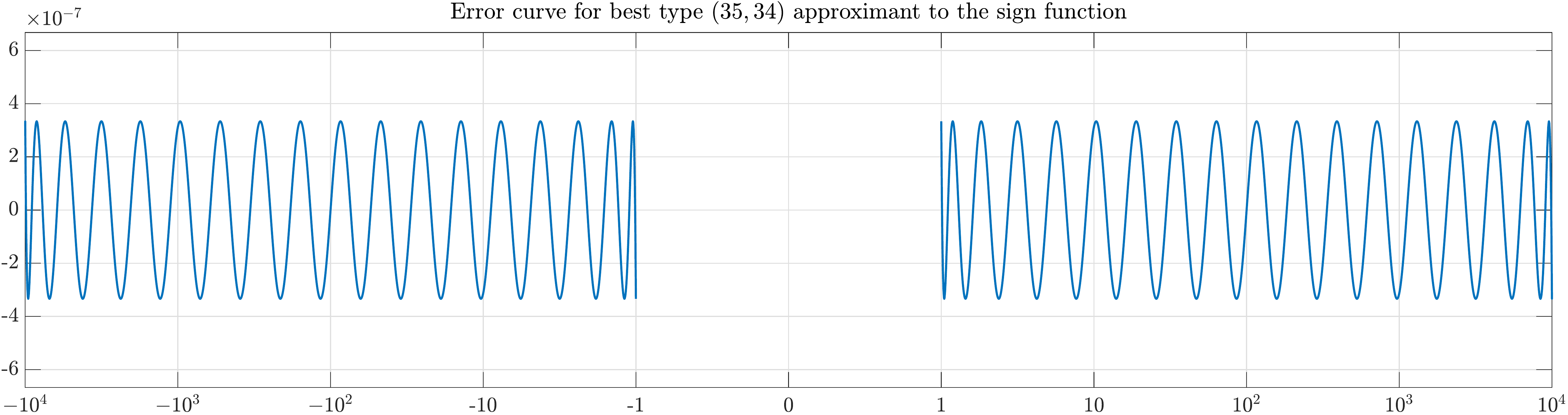}
	\caption{The error in type 
$(35,34)$
best approximation to the sign function on $[-10^4,-1]\cup[1,10^4]$, 
 computed via $xr(1/x^2)$, where $r(x)\approx \sqrt{x}$ as obtained in Figure~\ref{fig:sqrtrelerr}. This 
is 
 Zolotarev's fourth problem. 
}\label{fig:zolotarev}
\end{figure}

\section{AAA-Lawson algorithm}\label{sec:iniAAA}
Here we describe a new
algorithm for rational approximation that we call the AAA-Lawson
algorithm; in practice we recommend this for computing an initial
guess for the Remez iteration.  It applies on a finite, discrete
set rather than the continuous interval $[a,b]$ as in~\eqref{eq:mainproblem}.
Specifically, we consider the problem
\begin{equation}  \label{eq:mainproblemdiscrete}
\minimize_{r\in\mathcal{R}_{m,n}}\|f(Z)-r(Z)\|_{\infty},
\end{equation}
where $Z=\{z_1,\ldots,z_M\}$ is a set of distinct points (\emph{sample points}) in $[a,b]$. 
The number $M$ is usually large, $\textnormal{e.g.~}10^5$, and in
particular much bigger than $m$ and $n$.
The idea is that the solution for the discrete problem~\eqref{eq:mainproblemdiscrete} should converge to the continuous one~\eqref{eq:mainproblem} if we discretize the interval densely enough. 

AAA-Lawson proceeds as follows: 
\medskip
\begin{enumerate}
	\item Use the AAA algorithm to find an approximant~\eqref{eq:rzbary}, in particular the support points $\left\lbrace t_k\right\rbrace$ for a rational approximation $r$ to $f$. This step is not tied to a particular norm.
	\item Use a variant of Lawson's algorithm to obtain a refined (near-best) rational approximant in the $\ell_\infty$ norm. 
\end{enumerate}

Below we first review the AAA algorithm, introduced in~\cite{aaapreprint}, then the Lawson algorithm, and then we present the AAA-Lawson combination.
\subsection{The AAA algorithm}\label{sec:aaa}
Given a function $f$ and sample points $Z\in\mathbb{C}^M$, 
the AAA algorithm finds a rational approximant of type $(n,n)$ represented 
as in~\eqref{eq:rzbary} by $r(z)=\widetilde N(z)/\widetilde D(z):=\sum_{k=0}^n f(t_k)\beta_k(z-t_k)^{-1}\big/\sum_{k=0}^n \beta_k(z-t_k)^{-1}$. 
Here, the support points $\{t_k\}$ are a subset of $Z$ chosen in an adaptive, greedy manner so as to improve the approximation  as we increase $n$, exploiting the interpolatory property $\widetilde N(t_k)/\widetilde D(t_k)=f(t_k)$ for all $k$ (unless $\beta_k=0$).
AAA takes only $\beta_k$ as the unknowns, which are found by solving a 
linearized least-squares problem of the form $\minimize_{\|\beta\|_2=1}\|f\widetilde D-\widetilde N\|_{\Zt}$, where the subscript $\Zt$ denotes the discrete $2$-norm at points $\Zt:=Z\setminus\left\lbrace t_0,\ldots,t_n\right\rbrace$. For details, see~\cite{aaapreprint}. 

\paragraph{Noninterpolatory  AAA}\label{sec:aaanoninterp}
As we discussed in Section~\ref{sec:barybasic}, the representation $\widetilde N(z)/\widetilde D(z)$  is unsuitable when the goal is to represent $r^*$: it is necessary to use the representation $r(z)=N(z)/D(z)=\sum_{k=0}^n\alpha_k(z-t_k)^{-1}\big/\sum_{k=0}^n\beta_k(z-t_k)^{-1}$ as in~\eqref{eq:baryr}. 
This leads to a noninterpolatory variant of AAA, discussed briefly in~\cite[Section~10]{aaapreprint}. The resulting least-squares problem 
$\minimize_{
\|\alpha\|^2_2+\|\beta\|^2_2=1}\|fD-N\|_{\Zt}$ has unknowns $\alpha$ and $\beta$. Written in matrix form, it takes the form
\begin{equation}  \label{eq:aaalawls}
\minimize_{
\|\alpha\|^2_2+\|\beta\|^2_2=1
}
\left\|
\begin{bmatrix}
C& -FC
\end{bmatrix}
\begin{bmatrix}
\alpha \\\beta
\end{bmatrix}
\right\|_2, 
\end{equation}
where $F=\mbox{diag}(f(\Zt))$, and $C_{\ell,k}=1/(z_\ell-t_k)$ is the Cauchy (basis) matrix as in~\eqref{eq:geneigcauchy}, but with rows corresponding to $z_\ell\in \{t_0,\ldots,t_n\}$ removed.
We take the same support points $\{t_k\}$ as in AAA. 
We solve~\eqref{eq:aaalawls} by computing the SVD of the matrix 
$   \begin{bmatrix}      C& -FC    \end{bmatrix} $ 
and finding the right singular vector $v=\big[
\begin{smallmatrix}
\alpha\\ \beta  
\end{smallmatrix}
\big]\in\mathbb{R}^{2n+2}
$ corresponding to the smallest singular value. As in Section~\ref{sec:nondiag},  the case $m\neq n$ also uses the projection matrices $P_m,P_n$.



\subsection{Lawson's algorithm}\label{sec:lawson}

Lawson's algorithm~\cite{lawsonthesis} computes the best polynomial (linear) approximation based on an iteratively reweighted least-squares process. 
During the iteration, a set of weights is updated according to the residual of the previous solution. 

Specifically, suppose that $f$ is to be approximated on $Z=\{z_1,\ldots,z_M\}$ in a linear subspace $\mbox{span}(g_i)_{i=0}^n$. With an initial set of weights $\left\lbrace w_j\right\rbrace_{j=1}^M$
such that $w_j\geq 0$ and $\sum_{j=1}^M w_j=1$, 
one solves (using a standard solver) the weighted least-squares problem 
\begin{equation}
\label{eq:minfcg}
\minimize_{c_0,\ldots,c_n}\|f-\sum_{i=0}^nc_ig_i\|_{w} = \sqrt{\sum_{j=1}^{M} w_j(f(Z_j)-\sum_{i=0}^nc_{i}g_i(Z_j))^2}  , 
\end{equation}
and computes the residual 
$r_j = f(Z_j)-\sum_{i=0}^nc_ig_i(Z_j)$. The weights are then updated by $w_j:=w_j|r_j|$, followed by the re-normalization $w_j:=w_j/\sum_{i=1}^Mw_i$. 
Iterating this process is known to converge linearly to the best polynomial approximant (with nontrivial convergence analysis~\cite{cline1972rate}), and an acceleration technique is presented in \cite{ellacott1976linear}. 
\subsection{AAA-Lawson}\label{sec:aaalawson}
We now propose a rational variant of Lawson's algorithm. 
(A similar attempt was made in~\cite[\S~6.5]{cooper2007rational}, 
though the formulation there is not the same: most notably, adjusting the exponent $\gamma$ as done below appears to improve robustness significantly.)
The idea is to incorporate Lawson's 
approach into noninterpolatory AAA, replacing~\eqref{eq:minfcg} with a weighted version of~\eqref{eq:aaalawls}, and updating the weights as in Lawson. 

Specifically, given an initial set of weights $w\in\mathbb{R}^{M-(\max(m,n)+1)}$, usually all ones, and initializing the \emph{Lawson exponent} $\gamma=1$,  
we proceed as follows:
\medskip
\begin{enumerate}
	\item[1.] Solve 
	the weighted linear least-squares problem
	\begin{equation}  \label{eq:fd-naaa}
	\minimize_{\|\alpha\|^2_2+\|\beta\|^2_2=1}\|f(\Zt)D(\Zt)-N(\Zt)\|_w,  
	\end{equation}
	via the SVD
	of the matrix $   \mbox{diag}(\sqrt{w})\begin{bmatrix}      C& -FC    \end{bmatrix} $ (recall~\eqref{eq:aaalawls}).
	If the resulting $\left\Vert f(Z)-N(Z)/D(Z)\right\Vert_\infty$ is not smaller than before, then set $\gamma:=\gamma/2$. 
	\medskip
	\item[2.] 
	Update $w$ by 
	\begin{equation}
	\label{eq:wupdate}
	w_j\leftarrow w_j\left|f(Z_j)-\frac{N(Z_j)}{D(Z_j)}\right|^\gamma  , \quad 
	\forall j,
	\quad \mbox{then}\quad w_j:=\frac{w_j}{\sum_i w_i} 
	\end{equation}
	and return to step 1. 
\end{enumerate}
\medskip
Note the exponent $\gamma$ in~\eqref{eq:wupdate}. In the linear case, this is  $\gamma=1$. In the rational (nonlinear) case, 
for which experiments suggest convergence is a delicate issue,
we have found that taking $\gamma$ to be smaller makes the algorithm much more robust. 
We repeat the steps until $w$ undergoes small changes, e.g.~$10^{-3}$, or a maximum number of iterations (e.g.~30) is reached. 

We refer to this algorithm as AAA-Lawson. 
Each iteration is computed by an SVD of an $(M-\max(m,n)-1)\times (m+n+2)$ matrix, so 
the cost for $k$ iterations is $O(kM(m+n)^2)$.
Convergence analysis appears to be 
highly nontrivial and is out of our scope. 
We simply note here that if equioscillation 
of $f-N/D$ is 
achieved at $m+n+2$ points in $Z_*\subset Z$, then by defining $w^*$ as $w_j^*=1/\sqrt{|D(Z_j)|}$ for 
$j\in Z_*$ and $0$ otherwise, we see that $w^*/\sum w^*$ 
(together with $N^*/D^*=r^*$, the solution of~\eqref{eq:mainproblem})
is a fixed point of the iteration. 

\subsection{Experiments with AAA-Lawson}
Figure~\ref{fig:absxlawson} compares AAA and AAA-Lawson
(run for ten Lawson steps) for type (10,10) and (20,20) approximation
of $f(x) = |x|$.
The sample points are $10^4$ equispaced points on $[-1,1]$. 
Observe that the Lawson update 
significantly reduces the error and brings the error curve close
to equioscillation. 
\begin{figure}[htpb]
	\begin{minipage}[t]{0.49\hsize}
		\includegraphics[height=50mm]{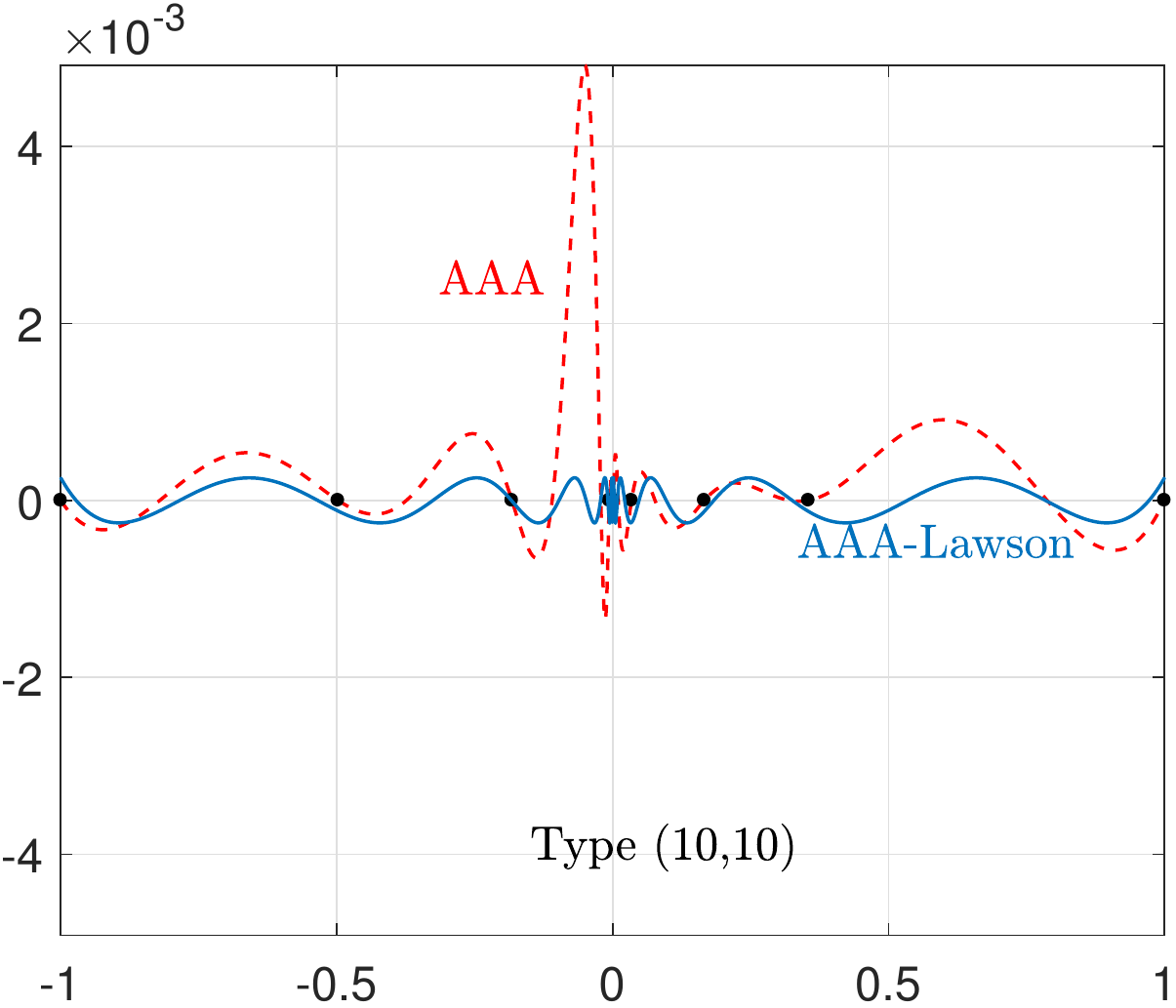}
	\end{minipage} 
	\begin{minipage}[t]{0.49\hsize}
		\includegraphics[height=50mm]{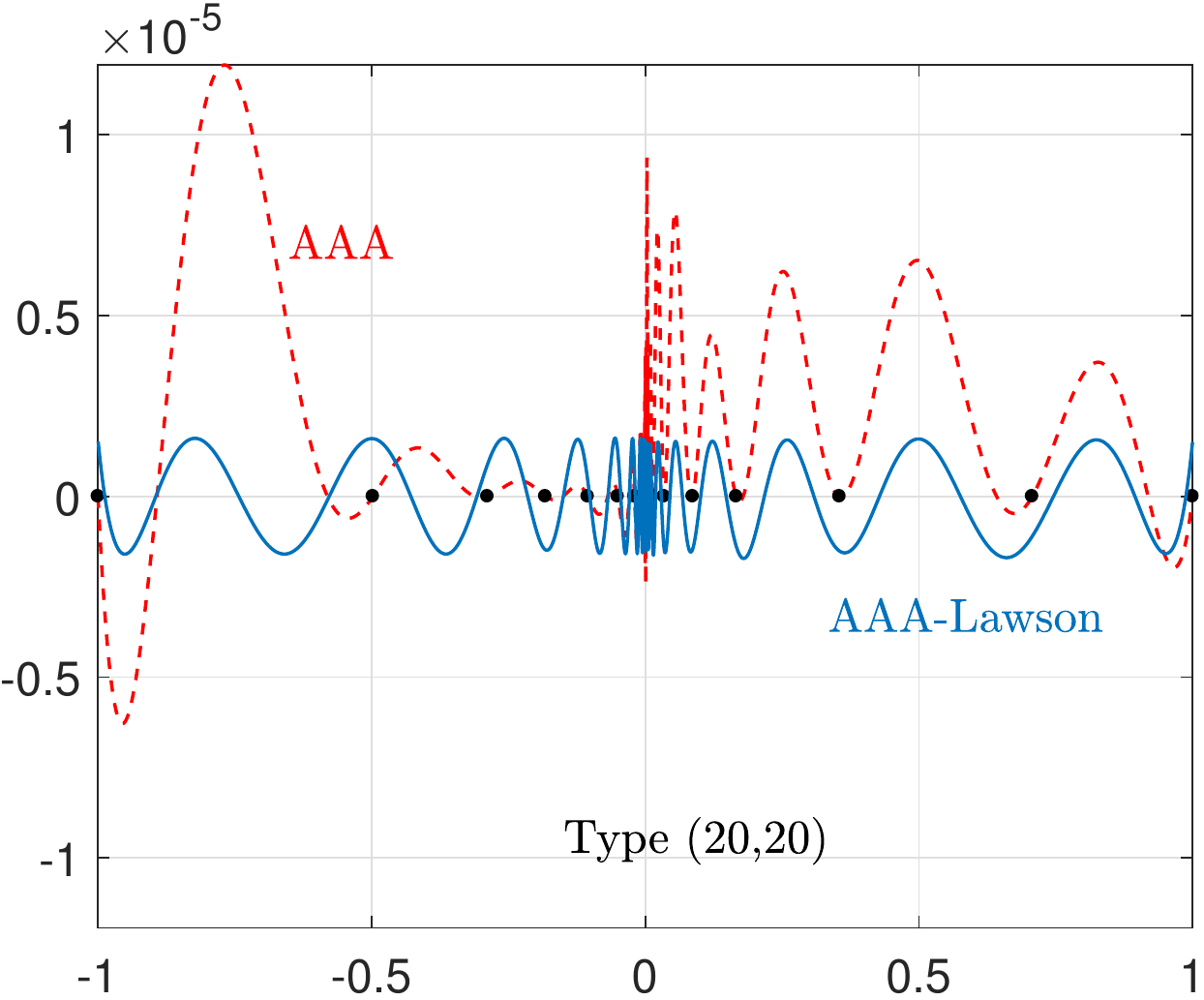}
	\end{minipage}
	\caption{
		Error of rational approximants to $f(x)=|x|$ by the AAA and AAA-Lawson algorithms. 
		The black dots are the support points. They are also 
		interpolation points for AAA, but not for AAA-Lawson.  
	}
	\label{fig:absxlawson}
\end{figure}

AAA-Lawson is a new algorithm for rational minimax approximation. 
However, we do not recommend it as a practical means to obtain $r^*$ over the classical Remez or differential correction algorithms. The reason is that its convergence is far from understood, and even when it does converge, the rate is slow (linear at best). We illustrate this in Figure~\ref{fig:aaavsremez}. In our Remez algorithm context, we take a small number (say 10) of AAA-Lawson steps to obtain a set of initial reference points, thereby taking advantage of the initial stage of the AAA-Lawson convergence. 

\begin{figure}[htbp]
	\centering
	\includegraphics[width=0.4\textwidth]{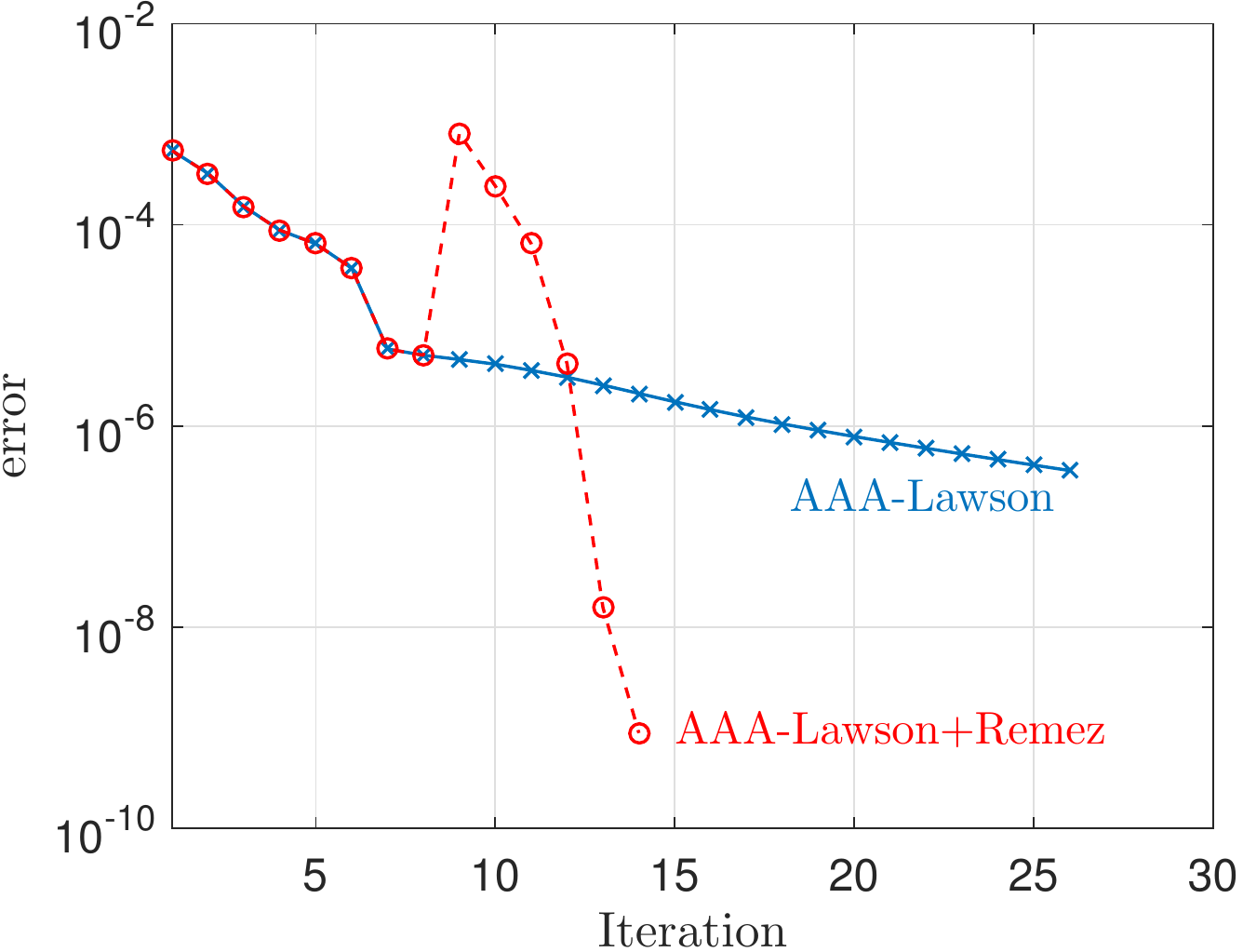}  
	\caption{Convergence of AAA-Lawson alone and AAA-Lawson followed by Remez,
		 for $f(x)=|x|$, $m=n=10$.
		The error is measured by $\|r^*-r_k\|_{\infty}$, where $r_k$ is the $k$th iterate. AAA-Lawson converges linearly, whereas
		Remez converges quadratically. 
	}
	\label{fig:aaavsremez}
\end{figure}

We note that other approaches 
for rational approximation are available, which can be used for initializing Remez. These include the Loewner approach presented in~\cite{mayo2007framework} and RKFIT~\cite{berljafa2017rkfit}. In particular, the Loewner approach is well suited when approximating smooth functions (and sometimes non-smooth functions like $f_4$~\cite{Karachalios}), 
often achieving an error of the same order of magnitude as the best approximation. Our experiments suggest that AAA-Lawson is at least as efficient and robust as these alternatives.



\subsection{Adaptive choice of support points}\label{sec:aaasupport}
At an early stage of the AAA-Lawson iteration, we usually do not have the correct number ($m+n+2$) of reference (oscillation) points in the error curve. 
Therefore, choosing the support points $\{t_k\}$ as in~\eqref{eq:supportchoose} is not an option. 
Instead, we use the same support points chosen by the AAA algorithm, which is typically a good set. 
Once convergence sets in and the error curve of the AAA-Lawson iterates has at least $m+n+2$ alternation points, we can switch to the adaptive choice 
\eqref{eq:supportchoose} as in Remez. We note, however, that adaptively changing the support points may further complicate the convergence, since it changes the linear least-squares problem~\eqref{eq:fd-naaa}.

\subsection{Adaptive choice of the sample points}
For solving the continuous problem~\eqref{eq:mainproblem}, we take the sample point set $Z$ to be $M$ points uniformly distributed on $[a,b]$ ($M\lesssim 10^{5}$, chosen to keep the run time under control).
Generally, it is necessary to sample more densely near a singularity if there is one; this is important e.g.~for $f(x) = |x|$. 
We incorporate this need as follows: use AAA to find the support points $\{t_k\}$ (assume they are sorted), and take $M/n$ points between $[t_k,t_{k+1}]$. 

\section{A barycentric version of the differential correction algorithm}\label{sec:dc}

The DC algorithm, due to Cheney and Loeb~\cite{cheney1961two}, has the great advantage of guaranteed global convergence in theory~\cite{barrodale1972differential,dua1973further}, which applies whether the approximation domain $X$ is an interval $[a,b]$ or a finite set.  It can also be extended to multivariate approximation problems~\cite{hettich1990algorithm}. In practice, however, it may suffer greatly from rounding errors, and its speed is often disappointing on larger problems. As we shall now describe, we have found that the first of these difficulties can be largely eliminated by the use of barycentric representations with adaptively chosen support
points. The second problem of speed, however, remains, which is why ultimately we prefer the Remez algorithm for most problems.



\subsection{The barycentric formulation} For an effective implementation, $X$ needs to be a finite set (e.g.~obtained by discretizing $[a,b]$) 
to reduce each iteration to a linear programming (LP) problem. Considering the diagonal case $m=n$, a barycentric version of the DC algorithm can be defined recursively as follows. (We assume the support points are fixed to the values $t_0,\ldots,t_n$, which do not belong to $X$.) Given $r_k=N_k/D_k\in\mathcal{R}_{n,n}(X)$, choose the partial fraction decompositions $N$ and $D$ of~\eqref{eq:baryr} that minimize the expression
\begin{equation}\label{eq:diffcor}
\max_{x\in X}\left\lbrace\frac{\left|f(x)D(x)-N(x)\right|-\delta_k\left|D(x)\right|}{\left|D_k(x)\right|}\right\rbrace,
\end{equation}
subject to
\begin{equation}\label{eq:samesign}
\textnormal{sign}(\omega_t(x)D(x)) = \textnormal{sign}(\omega_t(y)D(y)),\qquad \forall x,y\in X, \quad x\neq y,
\end{equation}
and
\begin{equation}\label{eq:normalizeDC}
\max_{0\leq j\leq n}\left|\beta_j\right|\leq 1,
\end{equation}
where $\delta_k=\max_{x\in X}\left|f(x)-r_k(x)\right|$. 
If $r=N/D$ is not good enough, continue with $r_{k+1}=r$. 
By imposing~\eqref{eq:normalizeDC}, we can establish convergence using an argument analogous to~\cite[Theorem~2]{barrodale1972differential}. In the polynomial basis setting, we know that the rate of convergence will ultimately be at least quadratic if the best approximation is non-degenerate~\cite[Theorem~3]{barrodale1972differential}. 
Non-diagonal approximations can be computed by adding the appropriate null space constraints as described in Section~\ref{sec:nondiag}.


\subsection{Choice of support points}\label{sec:dcsupport} Compared to the case of the barycentric Remez algorithm, changing the support points at each iteration of the DC algorithm makes it hard to impose a normalization condition similar to~\eqref{eq:normalizeDC} or do a convergence analysis of the method. 
We therefore fix $\{t_k\}$ throughout the execution. The strategy we have adopted is based on Section~\ref{sec:inicdf}: recursively construct type $(\ell,\ell)$ approximations with $\ell\leq n$. We take the set of support points of the $(\ell,\ell)$ problem based on a piecewise linear fit of the final reference points of the $(\ell-1,\ell-1)$ problem (similar to what is shown in Figure~\ref{fig:lowerdegree}). 

\begin{table}[h!]
	\small
	\centering
	\caption{Best type $(16,16)$ approximations to four functions using the barycentric DC algorithm. $X$ consists of $20000$ equispaced points inside $[-1,1]$.}
	\label{table:dc}
	\begin{tabular}{lcc}
		\hline 
		$i$ & $f_i$ & $\|f_i-r^*\|_{X,\infty}$ \T\B \\ \hline \vspace{2mm}
&& \vspace{-4mm}\\ \vspace{2mm} 
		1&  $\sum_{k=0}^{\infty}2^{-k}\cos(3^kx)$     &     $0.1377$          \\ \vspace{2mm}
		2&  $\min\left\lbrace \textnormal{sech}(3\sin(10x)),\sin(9x)\right\rbrace$     & $0.0610$              \\ \vspace{2mm}
		3&  $\sqrt{|x^3|}+|x+0.5|$     &   $1.2057\cdot 10^{-4}$            \\ 
		4&  $\left(\frac{1}{2}\textnormal{erf}\frac{x}{\sqrt{0.0002}}+\frac{3}{2}\right)e^{-x}$     &       $6.2045\cdot 10^{-6}$        \vspace{2mm}\\ \hline
	\end{tabular}
\end{table}

\begin{figure}[h!]
	\centering
	\includegraphics[width=\linewidth]{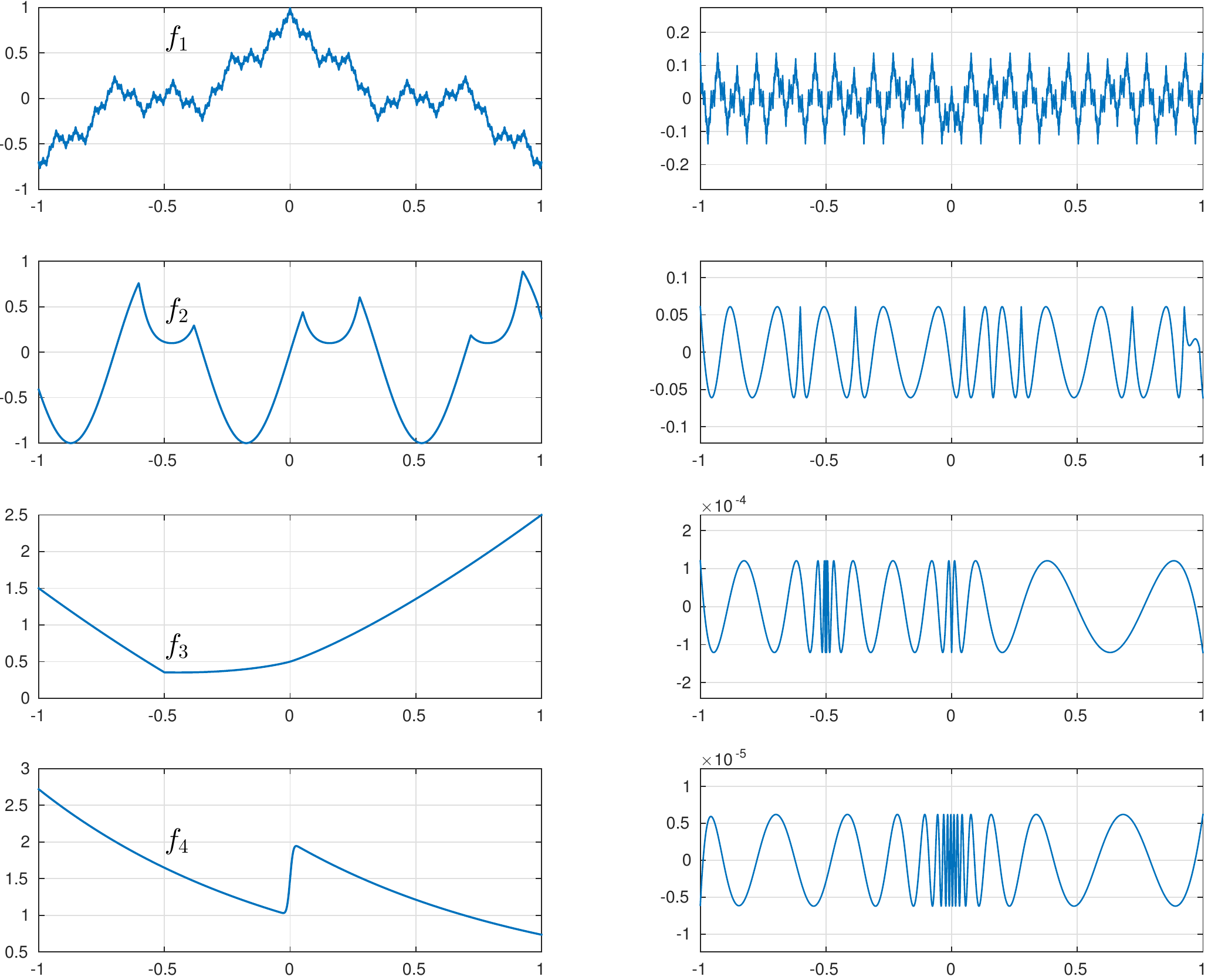}
	\caption{The functions of Table~\ref{table:dc} with error curves for best rational approximations computed by the barycentric DC algorithm.}
	\label{fig:dcErrors}
\end{figure}

\subsection{Experiments}We have implemented\footnote{The prototype code used is available at \href{https://github.com/sfilip/barycentricDC}{https://github.com/sfilip/barycentricDC}.} the barycentric DC algorithm in MATLAB using CVX~\cite{cvx} to specify the LP problems corresponding to~\eqref{eq:diffcor}--\eqref{eq:normalizeDC}, which are then solved using MOSEK's~\cite{mosek} state-of-the-art LP optimizers. The four examples in Table~\ref{table:dc} and Figure~\ref{fig:dcErrors}, for instance, demonstrate the effectiveness of the algorithm. For comparison, the sensitivity to the initial reference set prevented the convergence of our barycentric Remez implementation on \emph{all four of} these examples. Function $f_1$ is particularly interesting since it is a version of Weierstrass's classic example of a continuous but nowhere differentiable function.

Using a monomial or Chebyshev basis representation for the LP formulations quickly failed due to numerical errors, illustrating that the barycentric representation is crucial for the DC algorithm just as for the Remez algorithm.

We nevertheless echo the statement in the beginning of the section of the downsides of using the DC approach:
\begin{itemize}
\item Its overall cost. Producing the approximations in Figure~\ref{fig:dcErrors} took several minutes in MATLAB on a desktop machine for each example.
\item Numerical optimization tools for solving the corresponding LP problems break down at lower values of $m$ and $n$ than the ones we achieved with the barycentric Remez algorithm. We were usually able to go up to about type $(20,20)$.
\end{itemize}

\section{Minimax approximation in Chebfun}\label{sec:conclude}
\begin{figure}[h!]
	\centering
	\includegraphics[width=\linewidth,trim=0 0 35 0, clip]{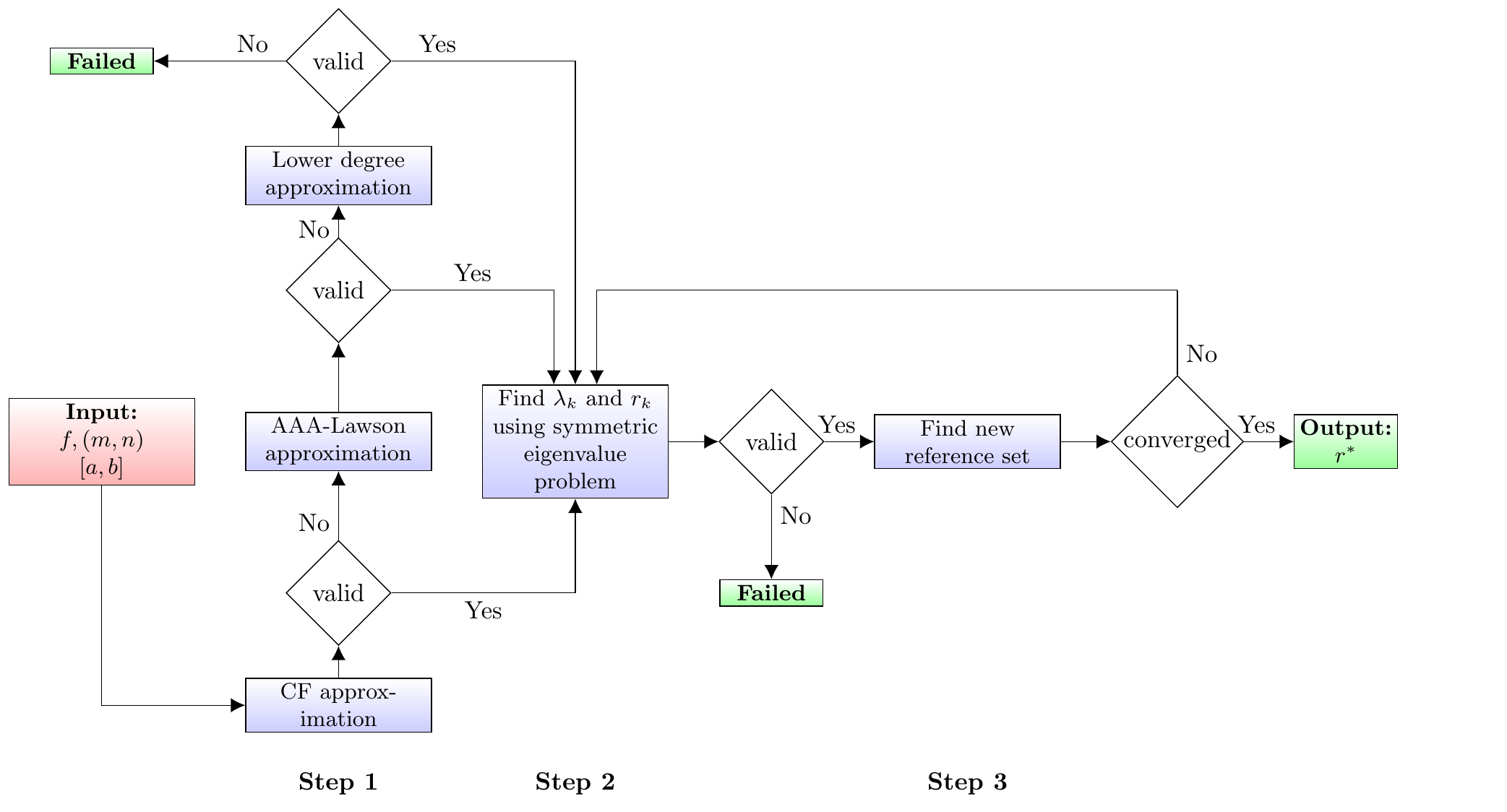}
	\caption{Flowchart summarizing the~\texttt{minimax} implementation of the rational Remez algorithm in the unweighted case. It follows the steps outlined at the start of Section~\ref{sec:remezbasics}. Step 1 consists of picking the initial reference set. This is done by applying in succession (if needed) the strategies discussed in Sections~\ref{sec:inicf},~\ref{subsubsec:AAAlaw} and~\ref{sec:inicdf}. Next up in Step 2 is computing the current approximant $r_k$ and alternation error $\lambda_k$. We do this by solving a symmetric eigenvalue problem~\eqref{eq:retbeta},~\eqref{eq:retbetamgn} or~\eqref{eq:retbetammn}, depending on $m=n$, $m>n$ or $m<n$. 
We then pick, if possible, the eigenpair leading to a rational approximant with no poles in $[a,b]$ (see discussion around equation~\eqref{eq:valqbary}). 
The next reference set is determined in Step 3 as explained in Section~\ref{sec:nextrefs}. If convergence is successful, the routine outputs a numerical approximant of $r^*$.}
	\label{fig:flowChart}
\end{figure}
We have presented many algorithmic details that have enabled the design of a fast and robust Remez implementation. 
In closing we remind readers that all this is available in Chebfun and readily explored in a few lines of code.  Download Chebfun version 5.7.0 or later from GitHub or \url{www.chebfun.org}, put it in your MATLAB path, and then try for example
\medskip
\begin{verbatim}
      [p,q,r] = minimax(@(x) abs(x),60,60);
      fplot(@(x) abs(x)-r(x),[-1 1])
\end{verbatim}
\medskip
In a few seconds a beautiful curve with 123 exponentially clustered equioscillation points will appear.
Figure~\ref{fig:flowChart} summarizes our algorithm in a flowchart.


\bibliographystyle{abbrv}
\bibliography{biblio}

\begin{thebibliography}{10}

\bibitem{boostsite}
{Boost C++ Libraries}.
\newblock \url{http://www.boost.org}.

\bibitem{elementselliptic}
N.~I. Akhiezer.
\newblock {\em Elements of the Theory of Elliptic Functions}, volume~79 of {\em
  Translations of Mathematical Monographs}.
\newblock American Mathematical Society, 1990.

\bibitem{barrodale1972differential}
I.~Barrodale, M.~J.~D. Powell, and F.~K. Roberts.
\newblock The differential correction algorithm for rational
  $\ell_\infty$-approximation.
\newblock {\em SIAM J. Numer. Anal.}, 9(3):493--504, 1972.

\bibitem{beckermann2000condition}
B.~Beckermann.
\newblock The condition number of real {V}andermonde, {K}rylov and positive
  definite {H}ankel matrices.
\newblock {\em Numer. Math.}, 85(4):553--577, 2000.

\bibitem{beckermann2017singular}
B.~Beckermann and A.~Townsend.
\newblock On the singular values of matrices with displacement structure.
\newblock {\em SIAM J. Matrix Anal. Appl.}, 38(4):1227--1248, 2017.

\bibitem{berljafa2017rkfit}
M.~Berljafa and S.~G\"uttel.
\newblock The {RKFIT} algorithm for nonlinear rational approximation.
\newblock {\em SIAM J. Sci. Comp.}, 39(5):A2049--A2071, 2017.

\bibitem{berrut1988rational}
J.-P. Berrut.
\newblock Rational functions for guaranteed and experimentally well-conditioned
  global interpolation.
\newblock {\em Comput. Math. Appl.}, 15(1):1--16, 1988.

\bibitem{berrut2005recent}
J.-P. Berrut, R.~Baltensperger, and H.~D. Mittelmann.
\newblock Recent developments in barycentric rational interpolation.
\newblock In {\em Trends and Applications in Constructive Approximation}, pages
  27--51. Springer, 2005.

\bibitem{berrut1997matrices}
J.-P. Berrut and H.~D. Mittelmann.
\newblock Matrices for the direct determination of the barycentric weights of
  rational interpolation.
\newblock {\em J. Comput. Appl. Math.}, 78(2):355--370, 1997.

\bibitem{berrut2004barycentric}
J.-P. Berrut and L.~N. Trefethen.
\newblock {Barycentric Lagrange interpolation}.
\newblock {\em SIAM Rev.}, 46(3):501--517, 2004.

\bibitem{braess2012nonlinear}
D.~Braess.
\newblock {\em Nonlinear Approximation Theory}.
\newblock Springer, 1986.

\bibitem{brezinski2013pade}
C.~Brezinski and M.~Redivo-Zaglia.
\newblock Pad{\'e}--type rational and barycentric interpolation.
\newblock {\em Numer. Math.}, 125(1):89--113, 2013.

\bibitem{brophy1975synthesis}
F.~Brophy and A.~Salazar.
\newblock Synthesis of spectrum shaping digital filters of recursive design.
\newblock {\em IEEE T. Circuits Syst.}, 22(3):197--204, 1975.

\bibitem{celis2008practical}
O.~S. Celis.
\newblock {\em Practical Rational Interpolation of Exact and Inexact Data}.
\newblock PhD thesis, Universiteit Antwerpen, 2008.

\bibitem{cheney1982introduction}
E.~Cheney.
\newblock {\em Introduction to Approximation Theory}.
\newblock AMS Chelsea Pub., 1982.

\bibitem{cheney1961two}
E.~W. Cheney and H.~L. Loeb.
\newblock Two new algorithms for rational approximation.
\newblock {\em Numer. Math.}, 3(1):72--75, 1961.

\bibitem{cline1972rate}
A.~K. Cline.
\newblock {Rate of convergence of Lawson's algorithm}.
\newblock {\em Math. Comp.}, 26(117):167--176, 1972.

\bibitem{cody1975funpack}
W.~J. Cody.
\newblock {The FUNPACK package of special function subroutines}.
\newblock {\em ACM Trans. Math. Softw.}, 1(1):13--25, 1975.

\bibitem{cody1993specfun}
W.~J. Cody.
\newblock {Algorithm 715: SPECFUN---a Portable FORTRAN Package of Special
  Function Routines and Test Drivers}.
\newblock {\em ACM Trans. Math. Softw.}, 19(1):22--30, 1993.

\bibitem{cooper2007rational}
P.~Cooper.
\newblock {\em Rational Approximation of Discrete Data with Asymptomatic
  Behaviour}.
\newblock PhD thesis, University of Huddersfield, 2007.

\bibitem{curtisosborne1966quadratic}
A.~Curtis and M.~R. Osborne.
\newblock The construction of minimax rational approximations to functions.
\newblock {\em Comput. J.}, 9(3):286, 1966.

\bibitem{curtis1966theory}
A.~R. Curtis.
\newblock Theory and calculation of best rational approximations.
\newblock In {\em Methods of Numerical Approximation}, pages 139--148.
  Elsevier, 1966.

\bibitem{deczky1974equiripple}
A.~Deczky.
\newblock {Equiripple and minimax (Chebyshev) approximations for recursive
  digital filters}.
\newblock {\em IEEE T. Acoust., Speech, Signal Process.}, 22(2):98--111, 1974.

\bibitem{Driscoll2014}
T.~A. Driscoll, N.~Hale, and L.~N. Trefethen.
\newblock {\em {Chebfun Guide}}.
\newblock Pafnuty Publications, Oxford, 2014.

\bibitem{dua1973further}
S.~N. Dua and H.~L. Loeb.
\newblock Further remarks on the differential correction algorithm.
\newblock {\em SIAM J. Numer. Anal.}, 10(1):123--126, 1973.

\bibitem{ellacott1976linear}
S.~Ellacott and J.~Williams.
\newblock {Linear Chebyshev approximation in the complex plane using Lawson's
  algorithm}.
\newblock {\em Math. Comp.}, 30(133):35--44, 1976.

\bibitem{floater2007barycentric}
M.~S. Floater and K.~Hormann.
\newblock Barycentric rational interpolation with no poles and high rates of
  approximation.
\newblock {\em Numer. Math.}, 107(2):315--331, 2007.

\bibitem{good1961colleague}
I.~J. Good.
\newblock The colleague matrix, a {C}hebyshev analogue of the companion matrix.
\newblock {\em Q. J. Math.}, 12(1):61--68, 1961.

\bibitem{cvx}
M.~Grant and S.~Boyd.
\newblock {CVX}: Matlab software for disciplined convex programming, version
  2.1.
\newblock \url{http://cvxr.com/cvx}, Mar. 2014.

\bibitem{gustavsen2006improving}
B.~Gustavsen.
\newblock Improving the pole relocating properties of vector fitting.
\newblock {\em IEEE Trans. Power Del.}, 21(3):1587--1592, 2006.

\bibitem{gustavsen1999rational}
B.~Gustavsen and A.~Semlyen.
\newblock Rational approximation of frequency domain responses by vector
  fitting.
\newblock {\em IEEE Trans. Power Del.}, 14(3):1052--1061, 1999.

\bibitem{hettich1990algorithm}
R.~Hettich and P.~Zencke.
\newblock {An algorithm for general restricted rational Chebyshev
  approximation}.
\newblock {\em SIAM J. Numer. Anal.}, 27(4):1024--1033, 1990.

\bibitem{Higham:2002:ASNA}
N.~J. Higham.
\newblock {\em Accuracy and Stability of Numerical Algorithms}.
\newblock SIAM, Philadelphia, PA, USA, second edition, 2002.

\bibitem{higham2004numerical}
N.~J. Higham.
\newblock The numerical stability of barycentric {L}agrange interpolation.
\newblock {\em IMA J. Numer. Anal.}, 24(4):547--556, 2004.

\bibitem{ionita2013lagrange}
A.~C. Ioni\textcommabelow{t}\u{a}.
\newblock {\em Lagrange Rational Interpolation and its Applications to
  Approximation of Large-Scale Dynamical Systems}.
\newblock PhD thesis, Rice University, 2013.

\bibitem{Karachalios}
D.~S. Karachalios.
\newblock {Hyperbolic function and the Loewner framework}.
\newblock private communication.

\bibitem{lawsonthesis}
C.~L. Lawson.
\newblock {\em Contributions to the Theory of Linear Least Maximum
  Approximations.}
\newblock PhD thesis, University of California, Los Angeles, 1961.

\bibitem{maehly1963methods}
H.~J. Maehly.
\newblock {Methods for fitting rational approximations, Parts II and III}.
\newblock {\em J. ACM}, 10(3):257--277, 1963.

\bibitem{mayo2007framework}
A.~J. Mayo and A.~C. Antoulas.
\newblock A framework for the solution of the generalized realization problem.
\newblock {\em Linear Algebra Appl.}, 425(2-3):634--662, 2007.

\bibitem{meinardus1967approximation}
G.~Meinardus.
\newblock {\em Approximation of Functions: Theory and Numerical Methods}.
\newblock Springer, 1967.

\bibitem{mosek}
{MOSEK ApS}.
\newblock {\em The MOSEK optimization toolbox for MATLAB manual. Version 7.1
  (Revision 28)}, 2015.

\bibitem{nakfrsirev}
Y.~Nakatsukasa and R.~W. Freund.
\newblock Computing fundamental matrix decompositions accurately via the matrix
  sign function in two iterations: The power of {Z}olotarev's functions.
\newblock {\em SIAM Rev.}, 58(3):461--493, 2016.

\bibitem{aaapreprint}
Y.~Nakatsukasa, O.~S{\`e}te, and L.~N. Trefethen.
\newblock The {AAA} algorithm for rational approximation.
\newblock Technical report, 2016.
\newblock To appear in {\it SIAM J. Sci. Comp.}

\bibitem{nakatsukasa2018rational}
Y.~Nakatsukasa and L.~N. Trefethen.
\newblock Rational approximation of {$x^n$}.
\newblock Technical report, 2018.
\newblock To appear in {\it Proc. AMS}.

\bibitem{newman1964}
D.~J. Newman.
\newblock Rational approximation to $|x|$.
\newblock {\em Michigan Math. J.}, 11(1):11--14, 03 1964.

\bibitem{pachon2010piecewise}
R.~Pach{\'o}n, R.~B. Platte, and L.~N. Trefethen.
\newblock Piecewise-smooth chebfuns.
\newblock {\em IMA J. Numer. Anal.}, 30(4):898--916, 2010.

\bibitem{PachTrefethen09}
R.~Pach\'{o}n and L.~N. Trefethen.
\newblock {Barycentric-Remez algorithms for best polynomial approximation in
  the Chebfun system}.
\newblock {\em BIT Numer. Math.}, 49(4):721--741, 2009.

\bibitem{pan2016bad}
V.~Y. Pan.
\newblock How bad are {V}andermonde matrices?
\newblock {\em SIAM J. Matrix Anal. Appl.}, 37(2):676--694, 2016.

\bibitem{pelios1967rational}
A.~Pelios.
\newblock Rational function approximation as a well-conditioned matrix
  eigenvalue problem.
\newblock {\em SIAM J. Numer. Anal.}, 4(4):542--547, 1967.

\bibitem{powell1981approximation}
M.~J.~D. Powell.
\newblock {\em Approximation Theory and Methods}.
\newblock Cambridge University Press, 1981.

\bibitem{pushnitski2017best}
A.~Pushnitski and D.~Yafaev.
\newblock Best rational approximation of functions with logarithmic
  singularities.
\newblock {\em Constr. Approx.}, 46(2):243--269, 2017.

\bibitem{reichel1990newton}
L.~Reichel.
\newblock Newton interpolation at {L}eja points.
\newblock {\em BIT Numer. Math.}, 30(2):332--346, 1990.

\bibitem{Rem34b}
E.~Remes.
\newblock {Sur le calcul effectif des polyn\^{o}mes d'approximation de
  Tchebichef}.
\newblock {\em C. r. hebd. s\'eances Acad. Sci.}, 199:337--340, 1934.

\bibitem{Rem34a}
E.~Remes.
\newblock {Sur un proc{\'e}d{\'e} convergent d'approximations successives pour
  d{\'e}terminer les polyn{\^o}mes d'approximation}.
\newblock {\em C. r. hebd. s\'eances Acad. Sci.}, 198:2063--2065, 1934.

\bibitem{schneider1986some}
C.~Schneider and W.~Werner.
\newblock Some new aspects of rational interpolation.
\newblock {\em Math. Comp.}, 47(175):285--299, 1986.

\bibitem{Stahl93}
G.~Stahl.
\newblock Best uniform approximation of {$ \vert x\vert$} on {$
  \lbrack-1,\,1\rbrack$}.
\newblock {\em Russian Acad. Sci. Sb. Math.}, 76(2):461--487, 1993.

\bibitem{stewart-sun:1990}
G.~W. Stewart and J.-G. Sun.
\newblock {\em Matrix Perturbation Theory (Computer Science and Scientific
  Computing)}.
\newblock Academic Press, 1990.

\bibitem{trefethen2013approximation}
L.~N. Trefethen.
\newblock {\em {Approximation Theory and Approximation Practice}}.
\newblock SIAM, 2013.

\bibitem{trefethen1983caratheodory}
L.~N. Trefethen and M.~H. Gutknecht.
\newblock {The Carath{\'e}odory-Fej{\'e}r method for real rational
  approximation}.
\newblock {\em SIAM J. Numer. Anal.}, 20(2):420--436, 1983.

\bibitem{van2010computing}
J.~Van~Deun.
\newblock Computing near-best fixed pole rational interpolants.
\newblock {\em J. Comput. Appl. Math.}, 235(4):1077--1084, 2010.

\bibitem{van2011robust}
J.~Van~Deun and L.~N. Trefethen.
\newblock {A robust implementation of the Carath{\'e}odory--Fej{\'e}r method
  for rational approximation}.
\newblock {\em BIT Numer. Math.}, 51(4):1039--1050, 2011.

\bibitem{VargaEtAl93}
R.~S. Varga, A.~Ruttan, and A.~D. Carpenter.
\newblock Numerical results on best uniform rational approximation of {$ \vert
  x\vert$} on {$ \lbrack-1,\,+1\rbrack$}.
\newblock {\em Mathematics of the USSR-Sbornik}, 74(2):271, 1993.

\bibitem{watson1980approximation}
G.~A. Watson.
\newblock {\em Approximation Theory and Numerical Methods}.
\newblock Wiley, 1980.

\bibitem{werner1962b}
H.~Werner.
\newblock {Die konstruktive Ermittlung der Tschebyscheff-Approximierenden im
  Bereich der rationalen Funktionen}.
\newblock {\em Arch. Ration. Mech. An.}, 11(1):368--384, 1962.

\bibitem{werner1962a}
H.~Werner.
\newblock {Tschebyscheff-Approximation im Bereich der rationalen Funktionen bei
  Vorliegen einer guten Ausgangsn{\"a}herung}.
\newblock {\em Arch. Ration. Mech. An.}, 10(1):205--219, 1962.

\end{thebibliography}

\end{document}